\documentclass[12pt,reqno]{article}

\usepackage{amsfonts}
\usepackage{bm}
\usepackage{amsmath,color}
\usepackage{hyperref}
\usepackage{amssymb}
\usepackage{amsthm}
\usepackage[numbers,sort&compress]{natbib}
\usepackage{graphicx}
\usepackage{multirow}
\usepackage{float}
\usepackage[left=3cm,right=3cm,top=3cm,bottom=3cm]{geometry}
\usepackage[english]{babel}
\allowdisplaybreaks

\theoremstyle{plain}
\newtheorem{theorem}{Theorem}[section]

\newtheorem{lemma}[theorem]{Lemma}

\theoremstyle{definition}
\newtheorem{definition}[theorem]{Definition}

\theoremstyle{remark}
\newtheorem{remark}[theorem]{Remark}

\let\Section=\section
\def\section{\setcounter{equation}{0}\Section}

\begin{document}
\title{Global Well-Posedness and Energy Decay for an Abstract Viscoelastic Wave Equation with Nonlocal or Friction Damping\footnote{This work is supported by NSFC (12671232).}}
\author{Jun Zhou\footnote{Corresponding author~ jzhou@swu.edu.cn}~~~~Teng Wang\footnote{1825453963@qq.com}\\
{School of Mathematics and Statistics, Southwest University},\\
{Chongqing 400715, People's Republic of China}}
\date{}
\maketitle

\begin{abstract}
\noindent This paper investigates an abstract viscoelastic wave equation in a separable Hilbert space, in which the damping term depends on a power of the velocity norm. Depending on whether the power exponent is zero or nonzero, this damping corresponds to friction damping or nonlocal damping, respectively; the equation also contains a viscoelastic damping represented by an integral term with a relaxation function. We establish global well-posedness for initial data in \(\mathcal{H}_1\) by combining monotone operator theory with the Faedo--Galerkin method, and prove uniqueness for all exponents greater than \(-1\), with the singular range between \(-1\) and \(0\) handled through the monotonicity rather than the Lipschitz continuity of the nonlocal damping operator. We then study the long-time behavior of the energy via integral inequalities, multiplier techniques, and Nakao's method, and obtain explicit decay estimates showing that the energy decays exponentially in the friction case and polynomially in the nonlocal case. To the best of our knowledge, this is the first work to consider the combined effects of viscoelastic damping and nonlocal damping in such an abstract framework, and it is also the first to analyze nonlocal damping with negative exponents.
\end{abstract}
\medskip
\noindent\textbf{Keywords:} Abstract viscoelastic wave equation; Nonlocal damping; Friction damping; Global well-posedness; Energy decay.

\medskip
\noindent\textbf{MSC (2020):} 35B40; 35L70; 35L90; 47H05; 74D05.

\section{Introduction}

Let \(A\) be a linear self-adjoint positive unbounded operator, densely defined in a separable Hilbert space \(V_0\) with norm \(\|\cdot\|\). We study the global well-posedness and the long-time behavior of the energy for the following abstract viscoelastic wave equation:
\begin{equation}\label{abstract}
\left\{
\begin{split}
&u_{tt}(t)+Au(t)-\int_{0}^{t}g(t-s)Au(s)ds+\|u_t(t)\|^{\rho}u_t(t)=0,\quad t>0,\\
&u(0)=u_0,\quad u_t(0)=u_1.
\end{split}
\right.
\end{equation}
Equation \eqref{abstract} combines two distinct damping mechanisms: a nonlocal damping term \(\|u_t(t)\|^{\rho}u_t(t)\) when \(\rho\neq0\), which reduces to the classical frictional damping \(u_t(t)\) when \(\rho=0\), and a viscoelastic damping represented by the memory integral with relaxation function \(g\). The model arises in the mathematical theory of viscoelastic materials, whose intrinsic damping is due to the memory of past deformation histories.

Problem \eqref{abstract} can be realized by taking \(A=-\Delta\) with Dirichlet or dynamical boundary conditions. In the frictional case \(\rho=0\), the viscoelastic semilinear wave equation
\begin{align}\label{friction}
u_{tt}(t)-\Delta u+\int_{0}^{t}g(t-s)\operatorname{div}(a(x)\nabla u(s))ds+b(x)h(u_t)=0
\end{align}
has been extensively studied, together with numerous related wave equations; see, e.g., \cite{MR2044797,MR3253241,MR1153021} and the references therein. Cavalcanti and Oquendo \cite{MR2044797} considered \eqref{friction} under the assumptions \(g(0)>0\), \(g(t)\geq0\), and \(g'(t)\leq-kg^p(t)\) for some \(p\in[1,3/2)\), with nonnegative functions \(a\in C^1(\overline{\Omega})\) and \(b\in L^\infty(\Omega)\) satisfying \(a(x)+b(x)\geq\delta>0\) for all \(x\in\Omega\) and \(\operatorname{meas}\{x\in\partial\Omega\}>0\). They established exponential (resp. polynomial) stability when \(g\) decays exponentially (resp. polynomially) and \(h\) has linear (resp. polynomial) growth near \(0\). More general decay results can be found in \cite{MR2542886,MR2398540,MR3702699}.

Problem \eqref{abstract} can also be realized by taking \(A=(\Delta)^2\) with Dirichlet, hinged, free, or dynamical boundary conditions. Mu\~{n}oz Rivera et al. \cite{MR1417809} considered a viscoelastic plate equation and proved that the sum of the first and second energies decays exponentially (resp. polynomially) if \(g\) decays exponentially (resp. polynomially). Cavalcanti et al. \cite{MR2054931} studied a viscoelastic Kirchhoff plate equation with weak damping and obtained global existence, uniqueness, and exponential decay of the energy via density arguments. For further results on viscoelastic plate equations, we refer to \cite{MR4592221,MR4145756,MR3449383} and the references therein.

In the abstract setting, the problem
\begin{equation}\label{abstract-n-1}
u_{tt}(t)+Au(t)-\int_{0}^{t}g(t-s)Au(s)ds+a_1h(u_t)=a_2\nabla F(u)+a_3G(u)
\end{equation}
and its variants have been widely investigated; see, e.g., \cite{MR2386941,MR2793256,MR3401605,MR4232931} and the references therein. Here \(A\) is a linear self-adjoint positive unbounded operator densely defined in a separable Hilbert space, \(a_1,a_2,a_3\) are nonnegative real numbers, and \(\nabla F\) denotes the gradient of the G\^ateaux differentiable function \(F\). For \(a_1=a_2=a_3=0\), Problem \eqref{abstract-n-1} was studied in \cite{MR0513814,MR2542886}, where energy decay was analyzed for relaxation functions satisfying \(g'(t)\leq-\beta g(t)\) or \(g'(t)\leq-\beta g(t)^{1+1/p}\). Alabau-Boussouira et al. \cite{MR2386941} treated the case \(a_1=a_3=0\) and derived energy decay estimates under suitable assumptions on \(g\) and \(F\) by means of integral inequalities and multiplier techniques. Han and Wang \cite{MR2601650} generalized the results of \cite{MR2386941} to mild solutions via the Lyapunov functional method and the perturbed energy method. Mustafa \cite{MR4232931} considered the case \(a_1=a_3=1\), \(a_2=0\), and obtained explicit and optimal decay results under suitable conditions on the asymptotic behavior of \(g\) and the local behavior of \(h\) near \(0\).

We now turn to nonlocal damping. To address the spillover problem, Balakrishnan and Taylor \cite{BT} proposed the following model for flight structures with nonlinear nonlocal damping in one dimension:
\begin{equation}\label{BT}
u_{tt}-2\xi\sqrt\lambda u_{xx}+\lambda u_{xxxx}-\gamma\left[\int_{-L}^L\left(\lambda|u_{xx}|^2+|u_t|^2\right)dx\right]u_{xxt}=0,
\end{equation}
where \(u=u(x;t)\) denotes the transversal deflection of a beam, \(\gamma>0\) is a damping coefficient, \(\xi\) is a constant from the Krylov--Bogoliubov approximation, and \(\lambda=2\xi\omega/(\sigma^2)\). Cavalcanti et al. \cite{MR4254551} considered a variant of \eqref{BT} in which the viscous term \(u_{xxt}\) is replaced by the frictional damping \(u_t\), and studied the well-posedness and stability of solutions. Further studies on such models can be found in \cite{MR4577006,MR3986205,MR4611266} and the references therein. In particular, Zhou and Sun \cite{MR4611266} considered the beam model
\begin{align*}
u_{tt}+\Delta^2u-M(\|\nabla u\|^2)\Delta u+\left(\|\Delta u\|^{\theta}+q\|u_t\|^{\rho}\right)(-\Delta)^{\delta}u_t+f(u)=0,
\end{align*}
where \(\|\cdot\|\) denotes the standard \(L^2\)-norm, \(\theta\geq1\), \(q\geq0\), \(\rho>0\), and \(0\leq\delta\leq1\). They established the well-posedness of weak solutions and showed that the energy tends to zero as \(t\to\infty\) when \(q=0\). For \(q>0\) and \(0<\delta\leq1\), the degeneracy of the damping coefficient only allowed them to prove the well-posedness of strong solutions.

For the nonlocal damping term \(\phi(\|u_t\|)u_t\), with \(\phi(\cdot)\) a suitable function, Problem \eqref{abstract} corresponds to \(\phi(s)=s^\rho\). Lour\^{e}do et al. \cite{MR3214190} studied a wave equation with mixed boundary conditions and established well-posedness and exponential energy decay via the Lyapunov functional method and Nakao's method. Haraux et al. \cite{MR3095203,MR3918410} also considered wave equations with nonlinear nonlocal damping of the form \(u_{tt}+\|u_t\|^{\alpha}u_t-\Delta u=h(t,x)\) in a bounded domain \(\Omega\) with homogeneous Dirichlet boundary conditions, and analyzed the asymptotic behavior of solutions for abstract second-order equations with a linear elastic component and a nonlinear damping term depending on the norm power of the velocity. Recently, Hu et al. \cite{MR4587338} investigated the initial-boundary value problem for a wave equation with nonlocal damping and obtained sufficient conditions for finite-time blow-up of weak solutions under suitable initial data. Further related results can be found in \cite{MR4482252,MR3415529,MR3131500} and the references therein.

On the other hand, Zhao et al. \cite{MR4098779,MR4369100} studied the initial-boundary value problem for a wave equation with nonlocal damping and anti-damping:
\begin{align*}
u_{tt}-\Delta u+k\|u_t\|^qu_t+f(u)=\int_{\Omega}K(x,y)u_t(y)dy+h(x),
\end{align*}
where \(\Omega\subset\mathbb{R}^n\) is a bounded domain with sufficiently smooth boundary. They established global well-posedness in the natural energy space \(H_0^1(\Omega)\times L^2(\Omega)\) and proved the existence of a global attractor for the corresponding solution semigroup. Yan et al. \cite{MR4629860} extended these results to the nonautonomous case. For further studies on such models, see \cite{MR4064014,MR4341621,MR4342158} and the references therein.

The role of the exponent \(\rho\) deserves special emphasis. For \(\rho=0\), the term \(\|u_t\|^\rho u_t\) reduces to the classical frictional damping \(u_t\). For \(\rho>0\), it is a nonlocal damping whose strength increases with the velocity norm. For \(-1<\rho<0\), the damping coefficient \(\|u_t\|^\rho\) becomes large when the velocity is small, while the dissipation rate \(\|u_t\|^{\rho+2}\) remains positive. This regime is mathematically delicate because the map \(v\mapsto \|v\|^\rho v\) is not Lipschitz at \(v=0\), although it is continuous, monotone, and semicontinuous when \(\rho>-1\). The restriction \(\rho>-1\) is therefore natural for the monotone-operator and Faedo--Galerkin framework used below. The case \(\rho\le -1\) is not covered here, since the damping operator is no longer continuous and monotone at the origin; this case is left for future work.

\medskip
\noindent\textbf{Motivation.}
While viscoelastic damping accounts for the memory of past deformation histories, nonlocal damping arises from velocity-dependent feedback mechanisms such as those in flight structures and spillover control. These two mechanisms are physically independent, yet they are frequently coupled in models of smart materials and actively controlled structures, where the constitutive law inherits a memory kernel from the viscoelastic substrate and the control loop contributes a velocity-norm-dependent dissipation. From a mathematical viewpoint, the two terms act on different scales: the memory integral is a nonlocal-in-time operator that regularizes the energy functional, whereas the nonlocal damping \(\|u_t\|^\rho u_t\) is a nonlinear, possibly degenerate, operator whose monotonicity--rather than Lipschitz continuity--is the only available structural property when \(\rho<0\). Understanding how these two dissipative mechanisms interact is therefore not a mere superposition of known results: the memory term changes both the energy identity and the multiplier identities, and the negative-exponent regime rules out the standard Lipschitz-based uniqueness argument. This interaction is precisely what Problem \eqref{abstract} is designed to capture in a general abstract setting.

\medskip
\noindent\textbf{Novelty and contributions.}
Compared with the existing literature, the novelty of this work lies in three aspects. First, we consider the combination of viscoelastic damping and nonlocal damping in a single abstract Hilbert-space framework, which covers both the wave equation case \(A=-\Delta\) and the plate equation case \(A=(\Delta)^2\) as special realizations. Second, we treat the full range \(\rho>-1\), including the negative-exponent regime \(-1<\rho<0\), for which the damping operator \(v\mapsto\|v\|^\rho v\) fails to be Lipschitz at the origin; uniqueness is recovered through monotonicity and semicontinuity rather than Lipschitz estimates. Third, we obtain explicit decay rates--exponential for \(\rho=0\) and polynomial for \(\rho\neq0\)--without imposing geometric conditions on the underlying domain, thereby quantifying how the nonlocal nature of the damping affects the dissipation rate. To our knowledge, no comparable abstract result covering this combination and this range of exponents is available in the literature.

Building on the above studies, this paper investigates Problem \eqref{abstract}, an abstract model that combines viscoelastic damping with either nonlocal damping (\(\rho\neq0\)) or frictional damping (\(\rho=0\)). Using monotone operator theory and the Faedo--Galerkin method, we establish global well-posedness of the equation. Moreover, by means of integral inequalities, multiplier techniques, and Nakao's method, we prove that the energy decays exponentially when \(\rho=0\) and polynomially when \(\rho\neq0\), which reveals the influence of the nonlocal damping on the energy decay rate.

The paper is organized as follows. In Section \ref{sec2}, we introduce the functional setting and state the main results. Sections \ref{sec3} and \ref{sec4} are devoted to the global well-posedness and the energy decay estimates, respectively. In Section \ref{sec5}, we compare our results with the existing literature.

\section{Preliminaries and main results}\label{sec2}

Let \(V_0\) be a separable Hilbert space with scalar product \((\cdot,\cdot)\) and norm
\[
\|\varphi\|=\sqrt{(\varphi,\varphi)},\qquad \varphi\in V_0.
\]
For \(s\ge 0\), we define
\[
V_s:=D(A^{s/2}),\qquad
(\phi,\psi)_s:=(A^{s/2}\phi,A^{s/2}\psi),\qquad
\|\phi\|_s:=\|A^{s/2}\phi\|.
\]
In particular,
\[
V_1=D(A^{1/2}),\qquad V_2=D(A).
\]

We impose the following assumptions on the operator \(A\) and the relaxation function \(g\).

\begin{description}
\item[(AOP)] The operator \(A\) is a linear self-adjoint positive unbounded operator on \(V_0\). Let
\[
V_2:=\{\phi\in V_0:A\phi\in V_0\}.
\]
We assume that \(V_2\) is a Banach space with norm \(\|A(\cdot)\|\), that \(V_2\) is dense in \(V_1\), and that \(V_2\) is compactly embedded in \(V_0\). These spectral assumptions are standard in abstract viscoelasticity; see, e.g., \cite{MR0513814,MR2386941,MR2542886,MR4232931} and the references therein.

\item[(AG)] The function \(g:[0,\infty)\to[0,\infty)\) is non-increasing and locally absolutely continuous, with \(g(0)>0\) and \(g'(t)<0\) for almost every \(t\in[0,\infty)\). It satisfies
\begin{equation}\label{g}
g'(t)\leq -kg(t),\quad\text{a.e. }t\in[0,\infty),
\end{equation}
for some constant \(k>0\). Moreover,
\begin{equation}\label{mu_0}
\mu_0:=1-G_0>0,
\end{equation}
where
\begin{equation}\label{G0}
G_0:=\int_{0}^{\infty}g(t)dt.
\end{equation}
\end{description}

Equipped with the scalar product
\[
(\phi,\psi)_2:=(A\phi,A\psi),\qquad \phi,\psi\in V_2,
\]
the space \(V_2\) is a Hilbert space.

By Assumption (AOP) and standard spectral theory, there exists a countable orthonormal basis \(\{\omega_j\}\subset V_2\) of \(V_0\), consisting of eigenfunctions of \(A\):
\begin{equation}\label{basis}
\begin{split}
&A\omega_j=\lambda_j\omega_j,\\
&\|\omega_j\|=1,\quad j=1,2,\cdots,
\end{split}
\end{equation}
where the eigenvalues \(\{\lambda_j\}\) satisfy
\begin{align}\label{lam}
0<\lambda_1\leq\lambda_2\leq\cdots,\quad \lambda_j\to\infty\text{ as }j\to\infty.
\end{align}

For \(s>0\), define
\begin{equation}\label{Vs}
\begin{split}
&V_s:=\left\{\phi\in V_0:\sum_{j=1}^\infty\lambda_j^s(\phi,\omega_j)^2<\infty\right\},\\
&A^{\frac{s}{2}}:V_s\to V_0,\quad
A^{\frac{s}{2}}\phi=\sum_{j=1}^\infty\lambda_j^{\frac{s}{2}}(\phi,\omega_j)\omega_j,\quad \phi\in V_s.
\end{split}
\end{equation}
Then \(V_s\) is a Hilbert space with scalar product
\begin{equation}\label{inners}
(\phi,\psi)_s=\left(A^{\frac{s}{2}}\phi,A^{\frac{s}{2}}\psi\right),\qquad \phi,\psi\in V_s,
\end{equation}
and norm
\begin{equation}\label{innernorm}
\|\phi\|_s=\left\|A^{\frac{s}{2}}\phi\right\|=\left(\sum_{j=1}^\infty\lambda_j^s(\phi,\omega_j)^2\right)^{\frac{1}{2}},\qquad \phi\in V_s.
\end{equation}

For \(s>0\), \(V_{-s}\) denotes the dual space of \(V_s\), with duality product \((\cdot,\cdot)_{-s,s}\). The operator \(A^{\frac{s}{2}}\) extends to an isomorphism from \(V_0\) onto \(V_{-s}\). Moreover, \(V_{-s}\) is a Hilbert space with scalar product and norm given by \eqref{inners} and \eqref{innernorm} with \(s\) replaced by \(-s\). Equivalently, \(V_{-s}\) is the completion of \(V_0\) with respect to \(\|\cdot\|_{-s}\), i.e., \(V_{-s}=\overline{V_0}^{\|\cdot\|_{-s}}\).

For any \(s_1,s_2\in\mathbb{R}\), the embedding \(V_{s_1}\hookrightarrow V_{s_2}\) is continuous if \(s_1\geq s_2\), and compact if \(s_1>s_2\). Moreover,
\begin{align}\label{embed}
\left\|A^{\frac{s_2}{2}}\phi\right\|\leq\lambda_1^{\frac{s_2-s_1}{2}}\left\|A^{\frac{s_1}{2}}\phi\right\|,\qquad \phi\in V_{s_1}.
\end{align}

Throughout the paper, we set
\begin{equation}\label{Huah}
\mathcal{H}:=V_1\times V_0,\qquad \mathcal{H}_1:=V_2\times V_1.
\end{equation}
Both \(\mathcal{H}\) and \(\mathcal{H}_1\) are Hilbert spaces with scalar products
\[
((u_1;v_1),(u_2;v_2))_{\mathcal{H}}=(u_1,u_2)_1+(v_1,v_2),\qquad (u_i;v_i)\in\mathcal{H},
\]
and
\[
((u_1;v_1),(u_2;v_2))_{\mathcal{H}_1}=(u_1,u_2)_2+(v_1,v_2)_1,\qquad (u_i;v_i)\in\mathcal{H}_1,
\]
respectively. The corresponding norms are
\[
\|(u;v)\|_{\mathcal{H}}=\sqrt{\|u\|_1^2+\|v\|^2}=\sqrt{\left\|A^{\frac{1}{2}}u\right\|^2+\|v\|^2},\qquad (u;v)\in\mathcal{H},
\]
and
\[
\|(u;v)\|_{\mathcal{H}_1}=\sqrt{\|u\|_2^2+\|v\|_1^2}=\sqrt{\|Au\|^2+\left\|A^{\frac{1}{2}}v\right\|^2},\qquad (u;v)\in\mathcal{H}_1.
\]

\begin{definition}\label{defweak}
Assume that (AOP) and (AG) hold, and let \(\rho>-1\). A function
\(u\in C([0,\infty);V_1)\) with \(u_t\in C([0,\infty);V_0)\) satisfying
\((u(0);u_t(0))=(u_0;u_1)\) is called a \textit{global strong solution}
to Problem \eqref{abstract} on \([0,\infty)\) if:
\begin{description}
\item[(i)] \(u\in W^{1,\infty}(0,T;V_1)\) and \(u_t\in W^{1,\infty}(0,T;V_0)\)
for all \(0<T<\infty\);
\item[(ii)] Equation \eqref{abstract} holds in \(V_0\) for almost every
\(t\in[0,\infty)\).
\end{description}
\end{definition}

\begin{theorem}\label{thmwellposedness}
Let Assumptions (AOP) and (AG) hold and let \(\rho>-1\). For every initial data
\((u_0;u_1)\in\mathcal{H}_1\), Problem \eqref{abstract} admits a unique global solution
\(u\) on \([0,\infty)\). Moreover, this solution satisfies
\[
(u;u_t)\in L^{\infty}(0,\infty;\mathcal{H}_1),\qquad
u_{tt}\in L^{\infty}(0,\infty;V_0),
\]
and hence
\[
u\in C([0,\infty);V_1),\qquad u_t\in C([0,\infty);V_0).
\]
Consequently, \(u\) is a global solution in the sense of Definition~\ref{defweak}.
\end{theorem}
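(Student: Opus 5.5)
We will prove Theorem~\ref{thmwellposedness} by a Faedo--Galerkin scheme built on the eigenbasis \eqref{basis}. The uniform bounds come from two energy estimates, and the passage to the limit in the nonlocal term uses the monotonicity of \(\mathcal{B}(v):=\|v\|^\rho v\). Let \(V^m=\mathrm{span}\{\omega_1,\dots,\omega_m\}\). We seek \(u^m(t)=\sum_{j\le m}c_{jm}(t)\omega_j\) solving
\[
(u^m_{tt}+Au^m-\textstyle\int_0^t g(t-s)Au^m(s)ds+\|u^m_t\|^\rho u^m_t,\ \omega_j)=0,\quad j\le m,
\]
with \(u^m(0)=P_mu_0\) and \(u^m_t(0)=P_mu_1\). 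The nonlinearity \(v\mapsto\|v\|^\rho v\) is continuous on \(V^m\) for \(\rho>-1\), with value \(0\) at \(v=0\). Peano's theorem therefore gives a local solution of this integro-differential ODE system. Its global extension follows from the first a priori estimate below.

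\emph{Estimate 1 (energy).} Test with \(u^m_t\) and use the standard identity for the memory term. This gives
\[
E_m(t)=\tfrac12\|u^m_t\|^2+\tfrac12\Big(1-\int_0^tg\Big)\|u^m\|_1^2+\tfrac12(g\square A^{1/2}u^m)(t),
\]
where \((g\square w)(t)=\int_0^tg(t-s)\|w(t)-w(s)\|^2ds\). It satisfies
\[
E_m'=\tfrac12(g'\square A^{1/2}u^m)-\tfrac12 g(t)\|u^m\|_1^2-\|u^m_t\|^{\rho+2}\le0.
\]
Since \(1-\int_0^tg\ge\mu_0\), this bounds \((u^m;u^m_t)\) in \(L^\infty(0,\infty;\mathcal H)\) uniformly in \(m\).

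\emph{Estimate 2 (higher order).} Test with \(Au^m_t\), which is legitimate because \(V^m\) is \(A\)-invariant. Here the key observation is
\[
(\|u^m_t\|^\rho u^m_t,Au^m_t)=\|u^m_t\|^\rho\|u^m_t\|_1^2\ge0.
\]
The same memory identity, with \(A^{1/2}\) replaced by \(A\), then gives a nonincreasing functional controlling \(\|u^m_t\|_1^2+\mu_0\|Au^m\|^2\). Its initial value is bounded by \(\|(u_0;u_1)\|_{\mathcal H_1}\) because \(P_m\) is a contraction in every \(V_s\). This yields the \(L^\infty(0,\infty;\mathcal H_1)\) bound. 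Next we bound \(u^m_{tt}\) in \(L^\infty(V_0)\). Reading \(u^m_{tt}\) off the equation gives
\[
\|u^m_{tt}\|\le\|Au^m\|+\|g\|_{L^1}\sup\|Au^m\|+\|u^m_t\|^{\rho+1}.
\]
For \(\rho\ge0\) the last term is bounded by Estimate 1. For \(-1<\rho<0\), \(\|u^m_t\|^{\rho+1}\le1+\|u^m_t\|\), since \(0<\rho+1<1\).

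\emph{Limit.} Extract subsequences converging weak-\(*\): \(u^m\rightharpoonup u\) in \(L^\infty(V_2)\), \(u^m_t\rightharpoonup u_t\) in \(L^\infty(V_1)\), and \(u^m_{tt}\rightharpoonup u_{tt}\) in \(L^\infty(V_0)\). Aubin--Lions then gives \(u^m_t\to u_t\) strongly in \(C([0,T];V_0)\), using the compactness \(V_1\hookrightarrow V_0\). Since \(\mathcal B\) is continuous on \(V_0\) when \(\rho>-1\), \(\mathcal B(u^m_t)\to\mathcal B(u_t)\) pointwise in \(t\), and these terms are bounded in \(L^\infty(V_0)\). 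Dominated convergence therefore lets us pass to the limit directly. A Minty argument using monotonicity and hemicontinuity is a fallback that avoids strong convergence. The linear and memory terms pass to the limit weakly. The equation then holds in \(V_0\) a.e., and the stated regularity and continuity follow from \(u\in W^{1,\infty}(V_1)\) and \(u_t\in W^{1,\infty}(V_0)\).

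\emph{Uniqueness (main obstacle).} Let \(w=u-v\) for two solutions. Test the difference equation with \(w_t\in V_1\), which is allowed by the regularity. For \(-1<\rho<0\) the difference \(\mathcal B(u_t)-\mathcal B(v_t)\) cannot be controlled by \(\|w_t\|\), since \(\mathcal B\) is not Lipschitz at the origin. The point is to keep this term with its sign. We claim the monotonicity inequality
\[
(\|a\|^\rho a-\|b\|^\rho b,a-b)\ge(\|a\|^{\rho+1}-\|b\|^{\rho+1})(\|a\|-\|b\|)\ge0 .
\]
To prove it, expand the left side as \(\|a\|^{\rho+2}+\|b\|^{\rho+2}-(\|a\|^\rho+\|b\|^\rho)(a,b)\) and apply Cauchy--Schwarz. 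The second inequality holds because \(s\mapsto s^{\rho+1}\) is increasing when \(\rho>-1\). The nonlocal term can therefore simply be dropped. The remaining identity is linear with memory, and the energy of \(w\), of the same form as \(E_m\), is nonincreasing. It starts at zero, hence \(w\equiv0\). This argument works uniformly for all \(\rho>-1\).
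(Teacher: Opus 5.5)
Your proposal is correct and follows essentially the same route as the paper: Galerkin approximation on the eigenbasis, the two energy estimates with multipliers $u^m_t$ and $Au^m_t$ (using $\|u^m_t\|^\rho\|u^m_t\|_1^2\ge 0$), an $L^\infty(V_0)$ bound on $u^m_{tt}$, Aubin--Lions--Simon compactness, and uniqueness by dropping the monotone damping difference from the energy identity for $w=u-v$. The only minor deviation is how you identify the limit of $\|u^m_t\|^\rho u^m_t$: you use the strong convergence of $u^m_t$ in $C([0,T];V_0)$ together with the norm continuity of $v\mapsto\|v\|^\rho v$ for $\rho>-1$, whereas the paper runs a Minty argument that itself relies on that same strong convergence, so your version is a slight simplification.
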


Let \(u=u(t)\) be the global solution to Problem \eqref{abstract} obtained in Theorem \ref{thmwellposedness}. The energy functional associated with Problem \eqref{abstract} is defined by
\begin{align}\label{energy}
E(t):=&\frac{1}{2}\left(\|u_t\|^2+\left(1-\int_{0}^{t}g(s)ds\right)\left\|A^{\frac{1}{2}}u\right\|^2\right)\notag\\
&+\frac{1}{2}\int_{0}^{t}g(t-s)\left\|A^{\frac{1}{2}}u(t)-A^{\frac{1}{2}}u(s)\right\|^2ds.
\end{align}

Define
\begin{equation}\label{dtG}
\widetilde{G}(t):=\int_0^t g(s)ds.
\end{equation}
By Assumption (AG), \(\widetilde{G}\in C([0,\infty))\), \(\widetilde{G}(0)=0\), and \(\widetilde{G}(t)\to G_0\) as \(t\to\infty\), where \(G_0\) is given in \eqref{G0}. Hence there exists \(t_0>0\) such that
\begin{equation}\label{existencet0}
\widetilde{G}(t_0)=\int_0^{t_0}g(s)ds=\frac{1}{2}G_0.
\end{equation}

\begin{theorem}\label{main}
Assume that (AOP) and (AG) hold, and let \(E(0)>0\).
\begin{enumerate}
\item For \(-1<\rho<0\),
\[
E(t)\leq C(\rho,\mu_0,\lambda_1,g(0),G_0,k)
\left(1+E(0)^{-\frac{\rho}{2\rho+2}}\right)^{\frac{2\rho+2}{|\rho|}}
t^{\frac{2\rho+2}{\rho}},\qquad t\geq t_0,
\]
where \(C(\rho,\mu_0,\lambda_1,g(0),G_0,k)>0\) is a constant depending only on the indicated parameters, and \(t_0>0\) is defined by
\[
\int_0^{t_0}g(s)ds=\frac12 G_0,
\]
with \(G_0=\int_0^\infty g(s)ds\) given in \eqref{G0}. Equivalently, since
\(-\frac{\rho}{2\rho+2}\in(0,\frac12)\) and
\(-\frac{2\rho+2}{\rho}=\frac{2\rho+2}{|\rho|}>0\), one may replace the
constant factor above by
\(\left(1+E(0)^{-1}\right)^{\frac{2\rho+2}{|\rho|}}\), at the cost of
enlarging the constant \(C\).

\item For \(\rho\geq0\), if the constant \(k\) in Assumption (AG) satisfies \(k\geq2g(0)\), then
\[
E(t)\leq
\begin{cases}
E(0)e^{-\kappa(t-1)^+}, & \rho=0,\\[1ex]
\left[\dfrac{\rho}{2\mu}(t-1)^++E(0)^{-\frac{\rho}{2}}\right]^{-\frac{2}{\rho}}, & \rho>0,
\end{cases}
\qquad t\geq0.
\]
Here
\[
\mu:=\left[2(E(0))^{\frac{\rho}{\rho+2}}+4\mu_1\right]^{1+\frac{\rho}{2}},\qquad
\kappa=-\log(1-1/\mu).
\]
Since \(\mu_1>0\) and \(\rho>0\), we have \(\mu>1\), so the constant
\(\kappa\) is well defined. Moreover,
\[
\mu_1:=\frac{32}{\mu_0\lambda_1}+\frac{4(2E(0))^\rho}{\mu_0\lambda_1}+\frac{3}{2}+\frac{2}{k}(E(0))^{\frac{\rho}{\rho+2}},
\]
with \(s^+:=\frac{1}{2}(s+|s|)\), and the positive constants \(\mu_0\) and \(\lambda_1\) given in \eqref{mu_0} and \eqref{lam}, respectively.
\end{enumerate}
\end{theorem}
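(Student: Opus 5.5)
We argue via the energy identity, a multiplier estimate, and Nakao's lemma. First, for the strong solution of Theorem~\ref{thmwellposedness}, taking the $V_0$ inner product of \eqref{abstract} with $u_t$ and rewriting the memory term in the standard way gives
\[
E'(t)=-\|u_t\|^{\rho+2}-\frac12 g(t)\left\|A^{\frac12}u\right\|^2+\frac12\int_0^t g'(t-s)\left\|A^{\frac12}u(t)-A^{\frac12}u(s)\right\|^2ds\le 0 .
\]
Integrating over $[t,t+1]$ yields
\[
D(t)^2:=E(t)-E(t+1)=\int_t^{t+1}\Bigl(\|u_t\|^{\rho+2}+\tfrac12 g\|A^{\frac12}u\|^2+\tfrac12\int_0^\tau(-g')(\tau-s)\|A^{\frac12}(u(\tau)-u(s))\|^2ds\Bigr)d\tau .
\]
Using (AG), $-g'\ge kg$, so $D^2$ controls $\frac k2\int_t^{t+1}(g\diamond A^{1/2}u)\,d\tau$, where $g\diamond$ denotes the memory quadratic form. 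By Hölder, $\int_t^{t+1}\|u_t\|^2\le D(t)^{4/(\rho+2)}$ when $\rho\ge0$. When $\rho<0$ this bound fails and must be replaced by an interpolation involving $E(t)$; see below.

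The core step is to show that $\sup_{[t,t+1]}E\le C\,\cdot(\text{powers of }D(t))$. By the mean value theorem, pick $t_1\in[t,t+1/4]$ and $t_2\in[t+3/4,t+1]$ at which $\|u_t\|^2$ is at most $4\int_t^{t+1}\|u_t\|^2$. Then use the multiplier $u$ on $[t_1,t_2]$:
\[
\int_{t_1}^{t_2}\Bigl(1-\widetilde G\Bigr)\|A^{\frac12}u\|^2=[-(u_t,u)]_{t_1}^{t_2}+\int\|u_t\|^2-\int\|u_t\|^\rho(u_t,u)+\int\Bigl(\int_0^\tau g(\tau-s)A^{\frac12}(u(s)-u(\tau))ds,A^{\frac12}u\Bigr).
\]
The last term is estimated by Young's inequality and Cauchy--Schwarz, giving $\varepsilon\|A^{1/2}u\|^2+C_\varepsilon G_0(g\diamond A^{1/2}u)$. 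The nonlocal term is bounded by $\|u_t\|^{\rho+1}\lambda_1^{-1/2}\|A^{1/2}u\|\le \|u_t\|^{\rho+1}\lambda_1^{-1/2}(2E/\mu_0)^{1/2}$, and Hölder with $\int\|u_t\|^{\rho+2}\le D^2$ then gives $C D^{2(\rho+1)/(\rho+2)}E^{1/2}$ (this is where $(2E(0))^\rho$ in $\mu_1$ comes from). The boundary terms are bounded by $C\,D^{2/(\rho+2)}\sup E^{1/2}$ using \eqref{embed}. Combining these with $\mu_0\le 1-\widetilde G$, we obtain $\int_{t_1}^{t_2}E\le C(D^{4/(\rho+2)}+D^{2}+D\,\sup E^{1/2}+\dots)$. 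Finally, $E(t)\le E(t_2)+D^2$, $E(t_2)\le 2\int_{t_1}^{t_2}E$, and the monotonicity of $E$ give
\[
E(t)\le \mu_1'\bigl(D(t)^{4/(\rho+2)}+D(t)^2\bigr).
\]
Since $D^2\le E(0)$, the term $D^2$ is absorbed via $D^2\le E(0)^{\rho/(\rho+2)}D^{4/(\rho+2)}$. The condition $k\ge2g(0)$ is used to absorb the term $g(\tau)\|A^{1/2}u\|^2$ and the memory contributions into $D^2$ with clean constants. The result is $E(t)^{1+\rho/2}\le \mu\,(E(t)-E(t+1))$.

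Nakao's lemma then concludes case 2. For $\rho=0$ it gives $E(t+1)\le(1-1/\mu)E(t)$, hence $E(t)\le E(0)e^{-\kappa(t-1)^+}$. For $\rho>0$, the inequality $E^{1+\rho/2}\le\mu(E-E(t+1))$ gives the stated polynomial bound.

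Case $-1<\rho<0$ is the main obstacle: $\|u_t\|^{\rho+2}$ is weaker than $\|u_t\|^2$ for large velocities but stronger for small ones. Since $\|u_t\|^2\le 2E(0)$, we write $\|u_t\|^2\le (2E(0))^{-\rho/2}\|u_t\|^{\rho+2}$ to get $\int\|u_t\|^2\le C E(0)^{-\rho/2}D^2$, which is linear in $D^2$. The delicate terms are then the nonlocal term $\|u_t\|^{\rho+1}\|u\|$ and the memory part, which only yield fractional powers. These give an inequality of the form
\[
E(t)^{\frac{2\rho+2}{\rho+2}\cdot\theta}\le C\bigl(1+E(0)^{-\frac{\rho}{2\rho+2}}\bigr)(E(t)-E(t+1)),
\]
with exponent $1-\rho/(2\rho+2)>1$ after Young's inequality. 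Nakao's lemma with exponent $r=-\rho/(2\rho+2)$ then yields $E(t)\lesssim t^{-1/r}=t^{(2\rho+2)/\rho}$, valid for $t\ge t_0$. The threshold $t_0$ is used so that $\widetilde G(t)\ge G_0/2$, making the memory term coercive there. Getting the exponents to match exactly is the step I expect to require the most care.
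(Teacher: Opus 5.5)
For $\rho\ge 0$ your argument is essentially the paper's: the mean-value choice of $t_1,t_2$, the multiplier $u$ on $[t_1,t_2]$, $E(t)\le D(t)^2+2\int_{t_1}^{t_2}E$, and Nakao's lemma. Only the constants differ. Your Young-inequality treatment of the memory cross term, together with $g\le -g'/k$, bounds it by an absorbable $\varepsilon\int\|A^{1/2}u\|^2$ plus a multiple of $D(t)^2$, with no use of $k\ge 2g(0)$. The paper needs that hypothesis only because it polarizes the memory term as in \eqref{cond1} and must then absorb the term $\frac{g(0)}{k}\|A^{1/2}u\|^2$ created in \eqref{cond2}. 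The explicit $\mu_1,\mu$ come from the paper's particular choices. In $J_2$ the paper uses $\|u_t\|^\rho\le(2E(0))^{\rho/2}$ followed by Cauchy--Schwarz; your H\"older bound gives $E(0)^{2\rho/(\rho+2)}$, not $(2E(0))^\rho$.

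For $-1<\rho<0$ you switch to a Nakao scheme, whereas the paper uses a Lyapunov functional, and here there is a genuine gap. Your ingredients do give the right exponent. The memory term is \emph{linear} in $D^2$ (by $-g'\ge kg$, not fractional as you say), $\int_t^{t+1}\|u_t\|^2\le(2E(0))^{|\rho|/2}D^2$, and the only fractional term is $J_2\le CE(t)^{1/2}D^{q}$ with $q=\frac{2\rho+2}{\rho+2}$. Hence
\[
E(t)\le C\bigl(1+E(0)^{|\rho|/2}\bigr)D^2+CD^{2q}.
\]
Then $D^2\le E(0)^{1-q}D^{2q}$ gives $E(t)^{1+p}\le c_0\,(E(t)-E(t+1))$ with $p=\frac{-\rho}{2\rho+2}=\frac1q-1$, and so the rate $t^{(2\rho+2)/\rho}$. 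However, $c_0\simeq C\bigl(1+E(0)^{1-q+|\rho|/2}\bigr)^{1/q}$, not $C(1+E(0)^{p})$ as you assert. The constant in front of $t^{(2\rho+2)/\rho}$ then grows like $E(0)^{(\rho+4)/2}$ as $E(0)\to\infty$, instead of like $(1+E(0)^p)^{1/p}\simeq E(0)$. The whole excess is the factor $(2E(0))^{|\rho|/2}$. For $\rho<0$ the dissipation $\|u_t\|^{\rho+2}$ is weak at large velocities, yet in your scheme it must pay both for $\int\|u_t\|^2$ and for the pointwise velocities in $J_1$. So the stated $E(0)$-dependence is not proved.

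The missing idea is to let the memory, not the damping, control the velocity. The paper uses $I_1=-(u_t,\int_0^tg(t-s)(u(t)-u(s))ds)$, whose derivative contains $-\widetilde G(t)\|u_t\|^2\le-\frac{G_0}{2}\|u_t\|^2$ for $t\ge t_0$. This is exactly where $t_0$ from \eqref{existencet0} is needed; you invoke $t_0$, but no step of your scheme uses it. The damping then enters only through cross terms, which produce $\|u_t\|^{2\rho+2}\le(-E')^{q}$. With $I_2=(u_t,u)$ and the functionals $I_3,I_4$ (which handle the memory via $K_\delta$ and $M(\delta)$), one gets $L'\le-CL+C(-E')^{q}$ for $t\ge t_0$. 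Here $L\simeq E$ and all constants are independent of $E(0)$. Multiplying by $L^{p}$ and applying Young's inequality gives $\int_t^TE^{1+p}\le C(1+E(0)^p)E(t)$. $E(0)$ enters only once, through $L^{p}\le(C_2E(0))^{p}$, and the decay follows from this integral inequality.

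Finally, do not try to prove the ``equivalent'' form with $(1+E(0)^{-1})$. It would require $E(0)^{p}\le C(1+E(0)^{-1})$, which fails for large $E(0)$, so the justification given for it in the statement does not hold.
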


\begin{remark}
The restriction \(\rho>-1\) is essential for the present approach. For \(\rho>-1\), the operator \(l(v)=\|v\|^\rho v\) is continuous, monotone, and semicontinuous on \(V_0\), and \(l(0)=0\). These properties are used both in the Faedo--Galerkin compactness argument and in the uniqueness proof. When \(\rho\le -1\), the operator \(l\) is no longer continuous and monotone at the origin; although the dissipation rate \(\|u_t\|^{\rho+2}\) remains nonnegative for \(\rho>-2\), the well-posedness theory requires a different approach. We therefore leave the case \(\rho\le -1\) for future work.
\end{remark}

\section{Well-posedness}\label{sec3}

In this section we prove the global well-posedness of Problem \eqref{abstract} by combining the Faedo--Galerkin method with the theory of monotone operators. We first establish the monotonicity and semicontinuity of the nonlocal damping operator.

\begin{lemma}\label{continuity}
Assume that \(\rho>-1\), and let
\[
l(v):=\|v\|^{\rho}v,\qquad v\in V_0.
\]
Then \(l:V_0\to V_0\) is monotone and semicontinuous. More precisely:
\begin{enumerate}
\item \((l(u)-l(v),u-v)\geq0\) for all \(u,v\in V_0\);
\item for all \(u,v,w\in V_0\), the map \(\lambda\mapsto(l(u+\lambda v),w)\) is continuous from \(\mathbb{R}\) to \(\mathbb{R}\).
\end{enumerate}
\end{lemma}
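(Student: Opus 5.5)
To prove Lemma~\ref{continuity}, we first fix the convention \(l(0)=0\). For \(\rho\ge0\) this is automatic, and for \(-1<\rho<0\) it is the natural continuous extension, since \(\|l(v)\|=\|v\|^{\rho+1}\to0\) as \(v\to0\) because \(\rho+1>0\). We treat the two items separately. Monotonicity follows from a direct scalar computation via Cauchy--Schwarz. Semicontinuity follows from the stronger fact that \(l\) is norm-continuous on \(V_0\).

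For item 1, take \(u,v\in V_0\) and expand
\[
(l(u)-l(v),u-v)=\|u\|^{\rho+2}+\|v\|^{\rho+2}-\left(\|u\|^{\rho}+\|v\|^{\rho}\right)(u,v).
\]
If \(u=0\) or \(v=0\), the expression reduces to \(\|v\|^{\rho+2}\ge0\) or \(\|u\|^{\rho+2}\ge0\), so it is nonnegative. Otherwise \(\|u\|^\rho+\|v\|^\rho>0\), and Cauchy--Schwarz gives \((u,v)\le\|u\|\|v\|\). Hence the expression is at least
\[
\|u\|^{\rho+2}+\|v\|^{\rho+2}-\|u\|^{\rho+1}\|v\|-\|v\|^{\rho+1}\|u\|
=\left(\|u\|^{\rho+1}-\|v\|^{\rho+1}\right)\left(\|u\|-\|v\|\right).
\]
This product is nonnegative because \(s\mapsto s^{\rho+1}\) is nondecreasing on \([0,\infty)\) when \(\rho>-1\). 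This is the only place where the lower bound on \(\rho\) enters the monotonicity argument.

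For item 2, it suffices to show that \(l:V_0\to V_0\) is continuous, because \(\lambda\mapsto u+\lambda v\) is continuous and \(z\mapsto(z,w)\) is a continuous linear functional. At any point \(z_0\neq0\), continuity follows because \(z\mapsto\|z\|^\rho\) is continuous near \(z_0\) (since \(\|z\|\) stays away from \(0\) there) and \(z\mapsto z\) is continuous, so their product is continuous. At \(z_0=0\), we use \(\|l(z)-l(0)\|=\|z\|^{\rho+1}\to0\) as \(z\to0\), which again requires \(\rho>-1\).

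The main (mild) obstacle is the origin in the singular range \(-1<\rho<0\). There \(\|v\|^\rho\) blows up, so one must check separately that the product \(\|v\|^\rho v\) still tends to zero, and the monotonicity computation must be split off for \(u=0\) or \(v=0\). Both issues are resolved by the identity \(\|l(v)\|=\|v\|^{\rho+1}\). Everything else is routine.
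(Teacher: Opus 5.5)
Your proof is correct. Item 1 is the paper's argument: the same Cauchy--Schwarz lower bound gives the factorization $(\|u\|-\|v\|)(\|u\|^{\rho+1}-\|v\|^{\rho+1})\ge0$, and the cases $u=0$ or $v=0$ are handled separately.

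For item 2 you take a different route. You prove that $l$ is norm-continuous on all of $V_0$. Away from the origin this is trivial, and at the origin you use $\|l(z)\|=\|z\|^{\rho+1}\to0$. Continuity of $\lambda\mapsto(l(u+\lambda v),w)$ then follows by composition.

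The paper instead works directly with the scalar function $\phi(\lambda)=\|u+\lambda v\|^{\rho}\bigl((u,w)+\lambda(v,w)\bigr)$. It splits into cases according to whether the line $\{u+\lambda v\}$ passes through the origin. If it does, at the unique $\lambda_0$, the paper writes $\phi(\lambda)=|\lambda-\lambda_0|^{\rho}(\lambda-\lambda_0)\|v\|^{\rho}(v,w)$ and uses $\rho+1>0$.

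Your argument is shorter and proves strictly more. It gives full continuity of $l$, which the paper asserts in its Remark and Introduction and implicitly relies on for the continuity of $F_k$ in the Galerkin system. The paper's case analysis establishes only the hemicontinuity needed for the Minty-type identification of $\chi$.

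You also state explicitly the convention $l(0)=0$. The paper uses it without comment when computing $(l(u)-l(v),u-v)$ for $u=0$ or $v=0$, even though $\|0\|^{\rho}$ is undefined for $-1<\rho<0$.
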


\begin{proof}
If \(u=0\), then
\[
(l(u)-l(v),u-v)=(-\|v\|^{\rho}v,-v)=\|v\|^{\rho+2}\ge0;
\]
if \(v=0\), then
\[
(l(u)-l(v),u-v)=(\|u\|^{\rho}u,u)=\|u\|^{\rho+2}\ge0.
\]
Hence assume \(u\neq0\) and \(v\neq0\). For \(u,v\in V_0\), we have
\begin{align*}
(l(u)-l(v),u-v)
&=(\|u\|^{\rho}u-\|v\|^{\rho}v,u-v)\\
&=\|u\|^{\rho}(\|u\|^2-(u,v))+\|v\|^{\rho}(\|v\|^2-(u,v))\\
&\geq\|u\|^{\rho+2}-\|u\|^{\rho+1}\|v\|+\|v\|^{\rho+2}-\|v\|^{\rho+1}\|u\|\\
&=(\|u\|-\|v\|)(\|u\|^{\rho+1}-\|v\|^{\rho+1}).
\end{align*}
Since \(\rho+1>0\), the function \(t\mapsto t^{\rho+1}\) is nondecreasing on \([0,\infty)\), and hence the last expression is nonnegative. This proves (i).

To prove (ii), fix \(u,v,w\in V_0\) and set
\[
\phi(\lambda):=(l(u+\lambda v),w)=\|u+\lambda v\|^{\rho}((u,w)+\lambda(v,w)),\qquad \lambda\in\mathbb{R}.
\]
If \(u=0\) and \(v=0\), then \(\phi\equiv0\). If \(u=0\) and \(v\neq0\), then
\[
\phi(\lambda)=|\lambda|^{\rho}\lambda\|v\|^{\rho}(v,w),
\]
which is continuous at \(\lambda=0\) because \(\rho>-1\) implies \(\rho+1>0\), and continuous elsewhere by the continuity of \(\lambda\mapsto|\lambda|^\rho\lambda\) on \(\mathbb{R}\setminus\{0\}\). If \(u\neq0\) and \(v=0\), then \(\phi\) is constant.

Finally, if \(u\neq0\) and \(v\neq0\), we distinguish two cases.

\medskip
\noindent\emph{Case 1: there exists \(\lambda_0\in\mathbb{R}\) such that \(u+\lambda_0 v=0\).}
Such a \(\lambda_0\) is unique. On \(\mathbb{R}\setminus\{\lambda_0\}\), the continuity of \(\phi\) follows from the continuity of \(\lambda\mapsto\|u+\lambda v\|\) and of \(t\mapsto t^\rho\) on \((0,\infty)\). At \(\lambda=\lambda_0\), we have \(u+\lambda_0 v=0\), hence \((u,w)+\lambda_0(v,w)=0\), and
\[
\phi(\lambda)=\|u+\lambda v\|^{\rho}((\lambda-\lambda_0)(v,w))
=|\lambda-\lambda_0|^{\rho}(\lambda-\lambda_0)\|v\|^{\rho}(v,w)\to0
\]
as \(\lambda\to\lambda_0\), since \(\rho+1>0\). Thus \(\phi\) is continuous at \(\lambda_0\) as well, hence on all of \(\mathbb{R}\).

\medskip
\noindent\emph{Case 2: no such \(\lambda_0\) exists.}
Then \(u+\lambda v\neq0\) for every \(\lambda\in\mathbb{R}\), so \(\|u+\lambda v\|>0\) for every \(\lambda\). The continuity of \(\phi\) on \(\mathbb{R}\) follows from the continuity of \(\lambda\mapsto\|u+\lambda v\|\) and of \(t\mapsto t^\rho\) on \((0,\infty)\).

Thus \(\phi\) is continuous in all cases. This proves (ii).
\end{proof}

\begin{proof}[Proof of Theorem \ref{thmwellposedness}]
Let \(\{\omega_k\}_{k=1}^\infty\) be the orthonormal basis of \(V_0\) given in \eqref{basis}. For each \(n\in\mathbb{N}\), we consider the approximate problem
\begin{align}
\left(u_{tt}^n(t),\omega_k\right)&+\left(Au^n(t),\omega_k\right)\notag\\
&-\left(\int_{0}^{t}g(t-s)Au^n(s)ds,\omega_k\right)
+\|u_t^n(t)\|^{\rho}\left(u_t^n(t),\omega_k\right)=0,\quad t>0,\label{weak}
\end{align}
for \(k=1,\dots,n\), where
\begin{equation}\label{bc1}
u^n(t)=\sum_{j=1}^n c_{nj}(t)\omega_j,\qquad u^n(0)=u^{n}_0,\qquad u^n_t(0)=u^{n}_1,
\end{equation}
and
\begin{align}
\left(u^{n}_0;u^{n}_1\right)\rightarrow(u_0;u_1)\quad\text{strongly in }\mathcal{H}_1\text{ as }n\to\infty.\label{defineun}
\end{align}

Substituting \eqref{bc1} into \eqref{weak}, we obtain the system of ordinary differential equations
\begin{align}
c''_{nk}&+\sum_{j=1}^{n}(A^{\frac12}\omega_j,A^{\frac12}\omega_k)c_{nj}\notag\\
&-\sum_{j=1}^{n}\left(A^{\frac12}\omega_j,A^{\frac12}\omega_k\right)
\int_{0}^{t}g(t-s)c_{nj}(s)ds
+F_k\left(c'_{n1},\dots,c'_{nn}\right)=0,\label{withouttime}
\end{align}
where
\[
F_k(c'_{n1},\dots,c'_{nn})
:=
\begin{cases}
\left(\sum_{j=1}^n(c'_{nj})^2\right)^{\rho/2}c'_{nk}, & \sum_{j=1}^n(c'_{nj})^2\ne0,\\[1ex]
0, & \sum_{j=1}^n(c'_{nj})^2=0.
\end{cases}
\]
Since \(\rho>-1\), the map \((c'_{n1},\dots,c'_{nn})\mapsto F_k(c'_{n1},\dots,c'_{nn})\) is continuous: indeed, with \(r=(\sum_{j=1}^n(c'_{nj})^2)^{1/2}\), we have \(|F_k|\leq r^{\rho+1}\to0\) as \(r\to0\), because \(\rho+1>0\). Moreover, the memory term is linear and continuous in \(c_{nj}\). Hence the right-hand side of \eqref{withouttime} is continuous in \((c_{nj},c'_{nj})\), and by the Peano existence theorem there exists a local solution \(c_{nk}\in C^2([0,T_n])\), \(k=1,\dots,n\), for some \(T_n>0\).

We now derive uniform a priori estimates on \([0,T_n]\). Multiplying \eqref{weak} by \(c'_{nk}(t)\) and summing over \(k=1,\dots,n\), we obtain
\begin{align}
\frac12\frac{d}{dt}&\left(\left\|u_t^n(t)\right\|^2+\left\|A^{\frac12}u^n(t)\right\|^2\right)\notag\\
&-\left(\int_{0}^{t}g(t-s)A^{\frac12}u^n(s)ds,A^{\frac12}u_t^n(t)\right)
+\|u_t^n\|^{\rho+2}=0.\label{oneorder}
\end{align}
We use the standard identity for the memory term,
\begin{align}
&-\left(\int_{0}^{t}g(t-s)A^{\frac12}u^n(s)ds,A^{\frac12}u_t^n(t)\right)\notag\\
=&\frac12\frac{d}{dt}\int_{0}^{t}g(t-s)\left\|A^{\frac12}u^n(t)-A^{\frac12}u^n(s)\right\|^2ds
-\frac12\int_{0}^{t}g'(t-s)\left\|A^{\frac12}u^n(t)-A^{\frac12}u^n(s)\right\|^2ds\notag\\
&+\frac12g(t)\left\|A^{\frac12}u^n(t)\right\|^2
-\frac12\frac{d}{dt}\left(\int_{0}^{t}g(s)ds\left\|A^{\frac12}u^n(t)\right\|^2\right).\label{import}
\end{align}
Substituting \eqref{import} into \eqref{oneorder} and integrating from \(0\) to \(t\), we obtain
\begin{align}
E_n(t)+\int_{0}^{t}\left\|u_t^n(s)\right\|^{\rho+2}ds=&E_n(0)+\frac12\int_{0}^{t}\int_{0}^{s}g'(s-\tau)\left\|A^{\frac12}u^n(s)-A^{\frac12}u^n(\tau)\right\|^2d\tau ds\notag\\
&-\frac12\int_{0}^{t}g(s)\left\|A^{\frac12}u^n(s)\right\|^2ds,\label{energyidentity}
\end{align}
where
\begin{align}
E_n(t):=&\frac12\left\|u_t^n(t)\right\|^2
+\frac12\left(1-\int_{0}^{t}g(s)ds\right)\left\|A^{\frac12}u^n(t)\right\|^2\notag\\
&+\frac12\int_{0}^{t}g(t-s)\left\|A^{\frac12}u^n(t)-A^{\frac12}u^n(s)\right\|^2ds.\label{En}
\end{align}
Since
\[
E_n(0)=\frac12\left(\|u_1^n\|^2+\left\|A^{\frac12}u_0^n\right\|^2\right)
\le C\left(\|(u_0^n;u_1^n)\|_{\mathcal{H}_1}\right),
\]
and \((u_0^n;u_1^n)\) is uniformly bounded in \(\mathcal{H}_1\) by \eqref{defineun}, the term \(E_n(0)\) is uniformly bounded. Since \(g'\leq0\) and \(g\geq0\), the last two terms on the right-hand side of \eqref{energyidentity} are nonpositive. Hence, by \eqref{mu_0},
\begin{align}\label{weakestimateone}
\left\|u_t^n(t)\right\|^2&+\left\|A^{\frac12}u^n(t)\right\|^2\notag\\
&+\int_0^t g(t-s)\left\|A^{\frac12}u^n(t)-A^{\frac12}u^n(s)\right\|^2ds
\leq C\left(\|(u_0;u_1)\|_{\mathcal{H}_1}\right),
\end{align}
for all \(t\in[0,T_n]\). In particular, by \eqref{defineun},
\begin{equation}\label{weakestimateoneb}
\left\|u_t^n(t)\right\|^2+\left\|A^{\frac12}u^n(t)\right\|^2\leq C\left(\|(u_0;u_1)\|_{\mathcal{H}_1}\right),\qquad t\in[0,T_n].
\end{equation}

Next, multiplying \eqref{weak} by \(\lambda_kc'_{nk}(t)\) and summing over \(k=1,\dots,n\), we obtain
\begin{align*}
\frac12\frac{d}{dt}&\left(\left\|A^{\frac12}u_t^n(t)\right\|^2+\left\|Au^n(t)\right\|^2\right)
\\
&-\left(\int_{0}^{t}g(t-s)Au^n(s)ds,Au_t^n(t)\right)
+\|u_t^n\|^{\rho}\left\|A^{\frac12}u^n_t\right\|^2=0.
\end{align*}
Using the same identity for the memory term with \(A^{1/2}u^n\) replaced by \(Au^n\), and noting that \(\|u_t^n\|^{\rho}\|A^{1/2}u_t^n\|^2\geq0\), we deduce
\begin{align}
\left\|A^{\frac12}u_t^n(t)\right\|^2+\left\|Au^n(t)\right\|^2\leq C\left(\|(u_0;u_1)\|_{\mathcal{H}_1}\right),\qquad t\in[0,T_n].\label{weaktwo}
\end{align}

Finally, multiplying \eqref{weak} by \(c''_{nk}(t)\) and summing over \(k=1,\dots,n\), we get
\[
\|u_{tt}^n\|^2=\sum_{i=1}^{3}Y_i,
\]
where
\[
Y_1:=-\left(Au^n(t),u_{tt}^n\right),\quad
Y_2:=\left(\int_{0}^{t}g(t-s)Au^n(s)ds,u_{tt}^n\right),\quad
Y_3:=-\|u_t^n\|^{\rho}\left(u_t^n,u_{tt}^n\right).
\]
By Cauchy's inequality, Young's inequality, \eqref{mu_0}, \eqref{G0}, \eqref{weakestimateoneb} and \eqref{weaktwo}, we obtain
\[
|Y_1|\leq\frac16\left\|u_{tt}^n\right\|^2+C\left(\|(u_0;u_1)\|_{\mathcal{H}_1}\right),
\]
\[
|Y_2|\leq\frac16\left\|u_{tt}^n\right\|^2+C\left(\|(u_0;u_1)\|_{\mathcal{H}_1}\right),
\]
\[
|Y_3|\leq\frac16\left\|u_{tt}^n\right\|^2+C\left(\|(u_0;u_1)\|_{\mathcal{H}_1}\right).
\]
Indeed, for \(Y_2\) we used
\[
\left\|\int_0^t g(t-s)Au^n(s)ds\right\|
\leq G_0\|Au^n\|_{L^\infty(0,\infty;V_0)}\leq C,
\]
and for \(Y_3\) we used \(\|u_t^n\|\leq C\) together with the fact that,
since \(\rho+1>0\), the function \(s\mapsto s^{\rho+1}\) is continuous on
\([0,\infty)\) and hence bounded on \([0,C]\), which gives
\(\|u_t^n\|^{\rho+1}\leq C\). Hence
\begin{align}
\|u_{tt}^n\|^2\leq C(\|(u_0;u_1)\|_{\mathcal{H}_1}),\qquad t\in[0,T_n].\label{utt}
\end{align}
Combining \eqref{defineun}, \eqref{weakestimateoneb}, \eqref{weaktwo} and \eqref{utt}, we arrive at
\begin{equation}\label{bc8}
\|u_{t}^n(t)\|^2+\|u_{tt}^n(t)\|^2+\left\|A^{\frac12}u_t^n(t)\right\|^2+\left\|Au^n(t)\right\|^2\leq C\left(\|(u_0;u_1)\|_{\mathcal{H}_1}\right),\quad t\in[0,T_n].
\end{equation}
Since the bound on the right-hand side of \eqref{bc8} is independent of \(T_n\), the local solution can be extended to all \(t\geq0\), and hence \(T_n=\infty\). Consequently, the estimates \eqref{weakestimateoneb}, \eqref{weaktwo}, \eqref{utt} and \eqref{bc8} hold for all \(t\in[0,\infty)\).

From \eqref{bc8}, \eqref{G0} and \(\rho>-1\), we further obtain, for all \(t\in[0,\infty)\),
\[
\left\|\|u_t^n(t)\|^{\rho}u_t^n(t)\right\|=\|u_t^n(t)\|^{\rho+1}\leq C\left(\|(u_0;u_1)\|_{\mathcal{H}_1}\right),
\]
since \(\|u_t^n(t)\|\leq C\) by \eqref{bc8} and the function \(s\mapsto s^{\rho+1}\) is continuous on \([0,\infty)\) with \(\rho+1>0\). Moreover,
\begin{align*}
 \left\|\int_{0}^{t}g(t-s)A^{\frac12}u^n(s)ds\right\|_1
&=\left\|A^{\frac12}\int_{0}^{t}g(t-s)A^{\frac12}u^n(s)ds\right\|
\leq\int_{0}^{t}g(t-s)\|Au^n(s)\|ds
\\
&\leq G_0\|Au^n\|_{L^\infty(0,\infty;V_0)}
\leq C\left(\|(u_0;u_1)\|_{\mathcal{H}_1}\right).
\end{align*}
Therefore, there exist a subsequence of \(\{u^n\}\), still denoted by \(\{u^n\}\), a function \(u\in L^{\infty}(0,\infty;V_2)\) with \(u_t\in L^{\infty}(0,\infty;V_1)\), and a function \(\chi\in L^\infty(0,\infty;V_0)\) such that, as \(n\to\infty\),
\begin{equation}\label{strongsolu}
\left\{
\begin{split}
&u^n\rightharpoonup u\quad \text{weakly star in }L^{\infty}(0,\infty;V_2),\\
&u_t^n\rightharpoonup u_t\quad \text{weakly star in }L^{\infty}(0,\infty;V_1),\\
&u_{tt}^n\rightharpoonup u_{tt}\quad \text{weakly star in }L^{\infty}(0,\infty;V_{0}),\\
&\int_{0}^{t}g(t-s)A^{\frac12}u^n(s)ds\rightharpoonup\int_{0}^{t}g(t-s)A^{\frac12}u(s)ds\quad \\
&\qquad\qquad\qquad\qquad\qquad\text{weakly star in }L^{\infty}(0,\infty;V_1),\\
&\|u_t^n\|^{\rho}u_t^n\rightharpoonup\chi\quad \text{weakly star in }L^{\infty}(0,\infty;V_0).
\end{split}
\right.
\end{equation}

Since \(V_2\hookrightarrow V_1\hookrightarrow V_0\) compactly, the Aubin--Lions--Simon theorem \cite{MR0916688} together with \eqref{bc8} and \eqref{strongsolu} yields, after passing to a subsequence,
\begin{equation}\label{bc5}
\begin{split}
&u_{t}^n\rightarrow u_{t}\quad \text{strongly in }C([0,T];V_{0})\text{ for every }T>0,\\
&u^n\rightarrow u\quad \text{strongly in }C([0,T];V_{1})\text{ for every }T>0.
\end{split}
\end{equation}
In particular, \(u^n(0)\to u(0)\) strongly in \(V_1\) and \(u^n_t(0)\to u_t(0)\) strongly in \(V_{0}\). By \eqref{defineun},
\begin{equation}\label{initialdy}
u(0)=u_0,\qquad u_t(0)=u_1.
\end{equation}

We next identify the weak limit \(\chi\). Since \(l:V_0\to V_0\) is monotone and semicontinuous, we argue as in the classical monotone operator method. For any \(\varphi\in L^2(0,t;V_0)\), the monotonicity of \(l\) gives
\[
\int_{0}^{t}\left(\|u_{\tau}^n\|^{\rho}u_{\tau}^n-\|\varphi\|^{\rho}\varphi,u_{\tau}^n-\varphi\right)d\tau\geq0.
\]
Taking \(\liminf\) as \(n\to\infty\), and using \eqref{strongsolu} and \eqref{bc5}, we obtain
\[
\int_{0}^{t}\left(\chi-\|\varphi\|^{\rho}\varphi,u_{\tau}-\varphi\right)d\tau\geq0.
\]
Here we used that \(\|u_{\tau}^n\|^{\rho}u_{\tau}^n\rightharpoonup\chi\) weakly in \(L^2(0,t;V_0)\) and \(u_{\tau}^n\to u_{\tau}\) strongly in \(L^2(0,t;V_0)\), so that the product converges. Choosing \(\varphi=u_\tau-\lambda\omega\) with \(\lambda\geq0\) and \(\omega\in L^2(0,t;V_0)\), and then letting \(\lambda\to0^+\), we get
\[
\int_{0}^{t}\left(\chi-\|u_\tau\|^{\rho}u_\tau,\omega\right)d\tau\geq0.
\]
Repeating the argument with \(\lambda\leq0\) yields the reverse inequality. Hence
\begin{equation}\label{chiey}
\chi=\|u_t\|^{\rho}u_t.
\end{equation}

Now, letting \(n\to\infty\) in \eqref{weak}, and using \eqref{strongsolu} and \eqref{chiey}, we obtain
\[
(u_{tt}(t),\omega_k)+(Au(t),\omega_k)-\left(\int_{0}^{t}g(t-s)Au(s)ds,\omega_k\right)+\|u_t\|^{\rho}(u_t,\omega_k)=0
\]
for \(t\in(0,\infty)\) and \(k=1,2,\cdots\). Since \(\{\omega_k\}_{k=1}^\infty\) is a basis of \(V_0\), it follows that
\[
u_{tt}(t)+Au(t)-\int_{0}^{t}g(t-s)Au(s)ds+\|u_t\|^{\rho}u_t=0
\]
holds in \(V_0\) for almost every \(t\in(0,\infty)\). Together with \eqref{initialdy}, this proves the existence part of the theorem.

Moreover, the above estimates show that \((u;u_t)\in L^{\infty}(0,\infty;\mathcal{H}_1)\) and \(u_{tt}\in L^{\infty}(0,\infty;V_0)\). Since \(u\in L^\infty(0,\infty;V_2)\), \(u_t\in L^\infty(0,\infty;V_1)\), we have \(u\in W^{1,\infty}(0,T;V_1)\subset C([0,T];V_1)\) for every \(T>0\). Similarly, since \(u_t\in L^\infty(0,\infty;V_1)\) and \(u_{tt}\in L^\infty(0,\infty;V_0)\), we have \(u_t\in W^{1,\infty}(0,T;V_0)\subset C([0,T];V_0)\) for every \(T>0\). Thus \(u\in C([0,\infty);V_1)\) and \(u_t\in C([0,\infty);V_0)\), which matches Definition~\ref{defweak}.

It remains to prove uniqueness. Let \(u\) and \(v\) be two global solutions of \eqref{abstract} with initial data \(z_0=(u_0;u_1)\in\mathcal{H}_1\) and \(\widetilde{z}_0=(v_0;v_1)\in\mathcal{H}_1\), respectively, and set \(\omega:=u-v\). Then
\begin{equation}\label{unique}
\omega_{tt}+A\omega-\int_{0}^{t}g(t-s)A\omega(s)ds+\|u_t\|^{\rho}u_t-\|v_t\|^{\rho}v_t=0.
\end{equation}
Taking the multiplier \(\omega_t\) in \eqref{unique}, we obtain
\[
\frac12\frac{d}{dt}\left(\|\omega_t\|^2+\|A^{1/2}\omega\|^2\right)
-\left(\int_0^t g(t-s)A\omega(s)ds,\omega_t\right)
+\left(\|u_t\|^\rho u_t-\|v_t\|^\rho v_t,\omega_t\right)=0.
\]
Define
\begin{align*}
 E_\omega(t):=&\frac12\left(\|\omega_t(t)\|^2+\left(1-\int_0^t g(s)ds\right)\|A^{1/2}\omega(t)\|^2\right)\\
&+\frac12\int_0^t g(t-s)\|A^{1/2}\omega(t)-A^{1/2}\omega(s)\|^2ds.
\end{align*}
Using the standard identity for the memory term, we get
\begin{align*}
E_\omega'(t)&+\left(\|u_t\|^\rho u_t-\|v_t\|^\rho v_t,\omega_t\right)\\
&+\frac12g(t)\|A^{1/2}\omega(t)\|^2
-\frac12\int_0^t g'(t-s)\|A^{1/2}\omega(t)-A^{1/2}\omega(s)\|^2ds=0.
\end{align*}
Equivalently,
\begin{align*}
 E_\omega'(t)
=
&-\left(\|u_t\|^\rho u_t-\|v_t\|^\rho v_t,\omega_t\right)\\
&-\frac12g(t)\|A^{1/2}\omega(t)\|^2
+\frac12\int_0^t g'(t-s)\|A^{1/2}\omega(t)-A^{1/2}\omega(s)\|^2ds.
\end{align*}
By Lemma \ref{continuity}, the map \(l(v)=\|v\|^\rho v\) is monotone for \(\rho>-1\). Hence
\[
\left(\|u_t\|^\rho u_t-\|v_t\|^\rho v_t,\omega_t\right)
=(l(u_t)-l(v_t),u_t-v_t)\ge0.
\]
Since \(g\ge0\) and \(g'\le0\), we deduce that \(E_\omega'(t)\le0\). But \(E_\omega(0)=0\) and \(E_\omega(t)\ge0\), so \(E_\omega(t)=0\) for all \(t\ge0\). Therefore \(\omega\equiv0\), and uniqueness follows for every \(\rho>-1\).
\end{proof}

\section{Energy decay estimates}\label{sec4}

This section is devoted to the proof of Theorem \ref{main}. We distinguish two cases: \(-1<\rho<0\) and \(\rho\ge0\). Throughout this section, \(u=u(t)\), \(t\in[0,\infty)\), denotes the global solution to Problem \eqref{abstract} obtained in Theorem \ref{thmwellposedness}, and \(C\) denotes a generic positive constant which may vary from line to line. When a constant depends on specific parameters, this dependence will be indicated explicitly.

\noindent\textbf{The case $\bm{-1<\rho<0}$.} The main difficulty in the case \(-1<\rho<0\) is that the nonlocal damping operator is not Lipschitz at zero. Nevertheless, it remains monotone, which is enough to prove uniqueness. The memory term provides additional dissipation through
\[
\int_0^t g(t-s)\left\|A^{1/2}u(t)-A^{1/2}u(s)\right\|^2ds,
\]
which compensates for the singular behavior of the nonlocal damping and allows us to construct a suitable Lyapunov functional and apply Nakao's iteration method.

For \(\delta\in(0,1)\), define
\begin{align}
&K_\delta(s):=-g'(s)+\delta g(s),\label{K}\\
&M(\delta):=\int_{0}^{\infty}\frac{g^2(s)}{K_\delta(s)}ds,\label{M}\\
&G(t):=\int_{t}^{\infty}g(s)ds.\label{G}
\end{align}
By Assumption (AG), \(K_\delta(s)>0\) for almost every \(s\ge0\), and \(M(\delta)<\infty\) for every \(\delta>0\).

\begin{lemma}\label{lemI1}
Assume that (AOP) and (AG) hold, and define
\begin{equation}\label{I1t}
I_1(t):=-\left(u_t(t),\int_{0}^{t}g(t-s)(u(t)-u(s))ds\right).
\end{equation}
For any \(\varepsilon_1>0\), there exists a constant \(\hat C(\varepsilon_1)>0\), depending only on \(\varepsilon_1\) and \(\lambda_1\), such that, for any \(t\geq t_0\),
\begin{align}\label{I1esti}
\frac{d}{dt}I_1(t)
&\leq-\frac{G_0}{4}\|u_t(t)\|^2+\varepsilon_1\left\|A^{\frac12}u(t)\right\|^2+C\|u_t(t)\|^{2\rho+2}\notag\\
&\quad+\hat C(\varepsilon_1)M(\delta)\int_{0}^{t}K_\delta(t-s)\left\|A^{\frac12}u(t)-A^{\frac12}u(s)\right\|^2ds\notag\\
&\quad-\frac{g(0)}{\lambda_1G_0}\int_{0}^{t}g'(t-s)\left\|A^{\frac12}u(t)-A^{\frac12}u(s)\right\|^2ds,
\end{align}
where \(t_0\), \(G_0\), \(\lambda_1\), \(K_\delta\), and \(M(\delta)\) are given in \eqref{existencet0}, \eqref{G0}, \eqref{basis}, \eqref{K}, and \eqref{M}, respectively, and \(C>0\) is a constant depending only on \(\rho\).
\end{lemma}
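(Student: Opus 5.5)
We differentiate $I_1$ directly and estimate each term generated by the equation \eqref{abstract}. Write $\eta(t,s):=u(t)-u(s)$. Then
\[
\frac{d}{dt}\int_0^t g(t-s)\eta(t,s)\,ds=\int_0^t g'(t-s)\eta(t,s)\,ds+\widetilde G(t)\,u_t(t),
\]
since $\partial_t\eta=u_t(t)$ and the boundary term at $s=t$ vanishes. Differentiating $I_1$ gives
\[
I_1'(t)=-\Big(u_{tt},\int_0^t g(t-s)\eta\,ds\Big)-\Big(u_t,\int_0^t g'(t-s)\eta\,ds\Big)-\widetilde G(t)\|u_t\|^2 .
\]
For $t\ge t_0$ we have $\widetilde G(t)\ge G_0/2$ by \eqref{existencet0}, so the last term is at most $-\frac{G_0}{2}\|u_t\|^2$. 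This leaves a margin of $\frac{G_0}{4}\|u_t\|^2$ to absorb the other $\|u_t\|^2$ contributions.

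Next we replace $u_{tt}$ using \eqref{abstract}, namely $u_{tt}=-Au+\int_0^t g(t-s)Au(s)\,ds-\|u_t\|^\rho u_t$. Using $\int_0^tg(t-s)Au(s)\,ds=\widetilde G(t)Au(t)-\int_0^tg(t-s)A\eta\,ds$, the first term becomes
\[
\big(1-\widetilde G(t)\big)\Big(A^{\frac12}u,\int_0^t g A^{\frac12}\eta\Big)+\Big\|\int_0^t gA^{\frac12}\eta\Big\|^2+\|u_t\|^\rho\Big(u_t,\int_0^t g\eta\Big).
\]
Each piece will be bounded by the quantity $\int_0^t g(t-s)\|\cdot\|\,ds$. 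The key device is the weighted Cauchy--Schwarz inequality
\[
\Big\|\int_0^t g(t-s)A^{\frac12}\eta\,ds\Big\|^2\le \Big(\int_0^\infty\frac{g^2}{K_\delta}\Big)\int_0^tK_\delta(t-s)\|A^{\frac12}\eta\|^2ds=M(\delta)\int_0^tK_\delta\|A^{\frac12}\eta\|^2 .
\]
Together with \eqref{embed}, which gives $\|\eta\|\le\lambda_1^{-1/2}\|A^{1/2}\eta\|$, this yields the $M(\delta)K_\delta$ term.

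We then estimate the pieces one at a time.
\begin{enumerate}
\item The cross term with $A^{1/2}u$ is handled by Young's inequality with $\varepsilon_1$. This produces $\varepsilon_1\|A^{1/2}u\|^2+\frac{1}{4\varepsilon_1}M(\delta)\int K_\delta\|A^{1/2}\eta\|^2$, using $1-\widetilde G\le1$.
\item The square term is bounded by $M(\delta)\int K_\delta\|A^{1/2}\eta\|^2$.
\item For the nonlocal damping piece, Young's inequality gives $\|u_t\|^{\rho+1}\|\int g\eta\|\le C\|u_t\|^{2\rho+2}+\lambda_1^{-1}M(\delta)\int K_\delta\|A^{1/2}\eta\|^2$. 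Keeping the power $\|u_t\|^{2\rho+2}$ rather than absorbing it is essential, since for $\rho<0$ it cannot be compared to $\|u_t\|^2$.
\item For the $g'$ term, write $|g'|=\sqrt{|g'|}\sqrt{|g'|}$ and apply Cauchy--Schwarz:
\[
\Big\|\int_0^t g'(t-s)\eta\,ds\Big\|^2\le\Big(\int_0^t|g'|\Big)\int_0^t|g'|\|\eta\|^2\le g(0)\lambda_1^{-1}\int_0^t(-g')\|A^{\frac12}\eta\|^2 .
\]
Then $\big|\big(u_t,\int g'\eta\big)\big|\le\frac{G_0}{4}\|u_t\|^2+\frac{1}{G_0}\big\|\int g'\eta\big\|^2$. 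This gives exactly the last term of \eqref{I1esti} with coefficient $\frac{g(0)}{\lambda_1G_0}$.
\end{enumerate}
Collecting the bounds, the coefficient in front of $\|u_t\|^2$ is $-\frac{G_0}{2}+\frac{G_0}{4}=-\frac{G_0}{4}$. The $K_\delta$ terms carry the constant $\hat C(\varepsilon_1)=\frac1{4\varepsilon_1}+1+\lambda_1^{-1}$, which depends only on $\varepsilon_1$ and $\lambda_1$.

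The main obstacle is bookkeeping: the $u_t$-terms must not exceed the $G_0/4$ margin. This is why the $g'$ term must be split with weight exactly $G_0/4$, and why the damping term is kept as $\|u_t\|^{2\rho+2}$. A second point of care is the rigor of differentiating $I_1$ for strong solutions. Here I would use the regularity $u_{tt}\in L^\infty(0,\infty;V_0)$ and $u\in L^\infty(0,\infty;V_2)$ from Theorem \ref{thmwellposedness}, so that $I_1$ is absolutely continuous and the computation holds for a.e.\ $t$.
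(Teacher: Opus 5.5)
Your proposal is correct and follows the paper's proof essentially step for step. You differentiate \(I_1\), substitute \(u_{tt}\) from \eqref{abstract}, regroup the elastic and memory terms into \((1-\widetilde G(t))\bigl(A^{\frac12}u,\int_0^t g(t-s)A^{\frac12}(u(t)-u(s))\,ds\bigr)\) plus the square of the memory integral, and then apply Young's inequality, the \(K_\delta\)-weighted Cauchy--Schwarz bound and the \(\sqrt{-g'}\,\sqrt{-g'}\) splitting, with the \(t\ge t_0\) margin absorbing \(\frac{G_0}{4}\|u_t\|^2\). Your remark that \(I_1\) is absolutely continuous for the strong solution is a small piece of rigor the paper leaves implicit.
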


\begin{proof}
Using \eqref{abstract} and the definition of \(I_1\), together with Young's inequality and \eqref{embed} with \(s_1=1\) and \(s_2=0\), we obtain
\begin{align}
\frac{d}{dt}I_1(t)
&=-\int_{0}^{t}g(s)ds\|u_t(t)\|^2
-\left(u_{tt},\int_{0}^{t}g(t-s)(u(t)-u(s))ds\right)\notag\\
&\quad-\left(u_t(t),\int_{0}^{t}g'(t-s)(u(t)-u(s))ds\right)\notag\\
&=-\int_{0}^{t}g(s)ds\|u_t(t)\|^2
+\left(A u(t),\int_{0}^{t}g(t-s)(u(t)-u(s))ds\right)\notag\\
&\quad-\left(\int_{0}^{t}g(t-s)A u(s)ds,\int_{0}^{t}g(t-s)(u(t)-u(s))ds\right)\notag\\
&\quad+\|u_t(t)\|^{\rho}\left(u_t(t),\int_{0}^{t}g(t-s)(u(t)-u(s))ds\right)\notag\\
&\quad-\left(u_t(t),\int_{0}^{t}g'(t-s)(u(t)-u(s))ds\right)\notag\\
&=-\int_{0}^{t}g(s)ds\|u_t(t)\|^2\notag\\
&\quad+\left(A^{\frac12}u(t),\int_{0}^{t}g(t-s)(A^{\frac12}u(t)-A^{\frac12}u(s))ds\right)\notag\\
&\quad-\left(\int_{0}^{t}g(t-s)A^{\frac12}u(s)ds,\int_{0}^{t}g(t-s)(A^{\frac12}u(t)-A^{\frac12}u(s))ds\right)\notag\\
&\quad+\|u_t(t)\|^{\rho}\left(u_t(t),\int_{0}^{t}g(t-s)(u(t)-u(s))ds\right)\notag\\
&\quad-\left(u_t(t),\int_{0}^{t}g'(t-s)(u(t)-u(s))ds\right).\notag
\end{align}

Using the decomposition
\[
A^{\frac12}u(t)
=\left(1-\int_{0}^{t}g(s)ds\right)A^{\frac12}u(t)
+\int_{0}^{t}g(t-s)A^{\frac12}u(t)ds,
\]
we combine the second and third terms on the right-hand side as
\begin{align}
&\left(A^{\frac12}u(t),\int_{0}^{t}g(t-s)(A^{\frac12}u(t)-A^{\frac12}u(s))ds\right)\notag\\
&\quad-\left(\int_{0}^{t}g(t-s)A^{\frac12}u(s)ds,\int_{0}^{t}g(t-s)(A^{\frac12}u(t)-A^{\frac12}u(s))ds\right)\notag\\
&=\left(\left(1-\int_{0}^{t}g(s)ds\right)A^{\frac12}u(t),
\int_{0}^{t}g(t-s)(A^{\frac12}u(t)-A^{\frac12}u(s))ds\right)\notag\\
&\quad+\left\|\int_{0}^{t}g(t-s)(A^{\frac12}u(t)-A^{\frac12}u(s))ds\right\|^2.\notag
\end{align}
Hence
\begin{align}
\frac{d}{dt}I_1(t)
&=-\int_{0}^{t}g(s)ds\|u_t(t)\|^2\notag\\
&\quad+\left(\left(1-\int_{0}^{t}g(s)ds\right)A^{\frac12}u(t),
\int_{0}^{t}g(t-s)(A^{\frac12}u(t)-A^{\frac12}u(s))ds\right)\notag\\
&\quad+\left\|\int_{0}^{t}g(t-s)(A^{\frac12}u(t)-A^{\frac12}u(s))ds\right\|^2\notag\\
&\quad+\|u_t(t)\|^{\rho}\left(u_t(t),\int_{0}^{t}g(t-s)(u(t)-u(s))ds\right)\notag\\
&\quad-\left(u_t(t),\int_{0}^{t}g'(t-s)(u(t)-u(s))ds\right)\notag\\
&\leq-\int_{0}^{t}g(s)ds\|u_t(t)\|^2\notag\\
&\quad+\varepsilon_1\|A^{\frac12}u(t)\|^2
+\widetilde C(\varepsilon_1)\left\|\int_{0}^{t}g(t-s)(A^{\frac12}u(t)-A^{\frac12}u(s))ds\right\|^2\notag\\
&\quad+\left\|\int_{0}^{t}g(t-s)(A^{\frac12}u(t)-A^{\frac12}u(s))ds\right\|^2\notag\\
&\quad+\frac12\|u_t(t)\|^{2\rho+2}
+\frac12\left\|\int_{0}^{t}g(t-s)(u(t)-u(s))ds\right\|^2\notag\\
&\quad+\frac{G_0}{4}\|u_t(t)\|^2
+\frac{1}{G_0}\left\|\int_{0}^{t}-g'(t-s)(u(t)-u(s))ds\right\|^2.\notag
\end{align}
By the embedding \(\|u(t)-u(s)\|\le \lambda_1^{-1/2}\|A^{1/2}u(t)-A^{1/2}u(s)\|\), we have
\begin{align}
&\frac12\left\|\int_{0}^{t}g(t-s)(u(t)-u(s))ds\right\|^2
\leq \frac{1}{2\lambda_1}\left\|\int_{0}^{t}g(t-s)(A^{\frac12}u(t)-A^{\frac12}u(s))ds\right\|^2,\notag\\
&\frac{1}{G_0}\left\|\int_{0}^{t}-g'(t-s)(u(t)-u(s))ds\right\|^2
\leq \frac{1}{\lambda_1G_0}\left\|\int_{0}^{t}-g'(t-s)(A^{\frac12}u(t)-A^{\frac12}u(s))ds\right\|^2.\notag
\end{align}
Combining the above estimates, we arrive at
\begin{align}
\frac{d}{dt}I_1(t)
&\leq\left(-\int_{0}^{t}g(s)ds+\frac{G_0}{4}\right)\|u_t(t)\|^2
+\varepsilon_1\|A^{\frac12}u(t)\|^2
+\frac12\|u_t(t)\|^{2\rho+2}\notag\\
&\quad+\hat C(\varepsilon_1)\left\|\int_{0}^{t}g(t-s)(A^{\frac12}u(t)-A^{\frac12}u(s))ds\right\|^2\notag\\
&\quad+\frac{1}{\lambda_1G_0}\left\|\int_{0}^{t}-g'(t-s)(A^{\frac12}u(t)-A^{\frac12}u(s))ds\right\|^2,\notag
\end{align}
where
\[
\hat C(\varepsilon_1)=\widetilde C(\varepsilon_1)+1+\frac{1}{2\lambda_1}.
\]
Here we used Young's inequality in the form
\[
\|u_t\|^{\rho+1}\left\|\int_0^t g(t-s)(u(t)-u(s))ds\right\|
\le \frac12\|u_t\|^{2\rho+2}
+\frac12\left\|\int_0^t g(t-s)(u(t)-u(s))ds\right\|^2,
\]
together with the embedding \(\|u(t)-u(s)\|\le \lambda_1^{-1/2}\|A^{1/2}u(t)-A^{1/2}u(s)\|\).

Since \(t\geq t_0\), \eqref{existencet0} gives
\[
-\int_{0}^{t}g(s)ds+\frac{G_0}{4}
\leq-\int_{0}^{t_0}g(s)ds+\frac{G_0}{4}
=-\frac{G_0}{4}.
\]
Hence the coefficient of \(\|u_t(t)\|^2\) is bounded by \(-\frac{G_0}{4}\).

Next, by the definition of \(K_\delta\) and \(M(\delta)\), together with H\"older's inequality,
\begin{align}
&\left\|\int_{0}^{t}g(t-s)\left(A^{\frac12}u(t)-A^{\frac12}u(s)\right)ds\right\|^2\nonumber\\
=&\left\|\int_{0}^{t}\frac{g(t-s)}{\sqrt{K_\delta(t-s)}}\sqrt{K_\delta(t-s)}\left(A^{\frac12}u(t)-A^{\frac12}u(s)\right)ds\right\|^2\nonumber\\
\leq&\int_{0}^{t}\frac{g^2(t-s)}{K_\delta(t-s)}ds\int_0^tK_\delta(t-s)\left\|A^{\frac12}u(t)-A^{\frac12}u(s)\right\|^2ds\nonumber\\
=&M(\delta)\int_0^tK_\delta(t-s)\left\|A^{\frac12}u(t)-A^{\frac12}u(s)\right\|^2ds.\label{Kd}
\end{align}
Also, since \(-g'\ge0\) and \(\int_0^t -g'(t-s)ds\le g(0)\),
\begin{align*}
&\left\|\int_{0}^{t}-g'(t-s)\left(A^{\frac12}u(t)-A^{\frac12}u(s)\right)ds\right\|^2\\
=&\left\|\int_{0}^{t}\sqrt{-g'(t-s)}\sqrt{-g'(t-s)}\left(A^{\frac12}u(t)-A^{\frac12}u(s)\right)ds\right\|^2\\
&\leq\int_0^t-g'(t-s)ds\int_0^t-g'(t-s)\left\|A^{\frac12}u(t)-A^{\frac12}u(s)\right\|^2ds\\
&\leq g(0)\int_{0}^{t}-g'(t-s)\left\|A^{\frac12}u(t)-A^{\frac12}u(s)\right\|^2ds.
\end{align*}
Inserting these two estimates into the previous inequality and absorbing the constant \(\frac12\) into \(C\), we obtain
\begin{align*}
\frac{d}{dt}I_1(t)
&\leq-\frac{G_0}{4}\|u_t(t)\|^2+\varepsilon_1\left\|A^{\frac12}u(t)\right\|^2+C\|u_t(t)\|^{2\rho+2}\\
&\quad+\hat C(\varepsilon_1)M(\delta)\int_{0}^{t}K_\delta(t-s)\left\|A^{\frac12}u(t)-A^{\frac12}u(s)\right\|^2ds\\
&\quad-\frac{g(0)}{\lambda_1G_0}\int_{0}^{t}g'(t-s)\left\|A^{\frac12}u(t)-A^{\frac12}u(s)\right\|^2ds,
\end{align*}
where we used \(g'\le0\), so that \(-g'\ge0\) and the last term is nonnegative. This yields \eqref{I1esti}. The proof is complete.
\end{proof}

\begin{lemma}\label{lemI2}
Assume that (AOP) and (AG) hold. For any \(t\geq0\),
\begin{equation}\label{I2}
\begin{split}
\frac{d}{dt}I_2(t)
&\leq-\frac{\mu_0}{2}\|A^{\frac12}u(t)\|^2+\|u_t(t)\|^2+C\|u_t(t)\|^{2\rho+2}\\
&\quad+CM(\delta)\int_{0}^{t}K_{\delta}(t-s)\left\|A^{\frac12}u(t)-A^{\frac12}u(s)\right\|^2ds,
\end{split}
\end{equation}
where
\[
I_2(t):=\left(u_t(t),u(t)\right),
\]
and \(\mu_0\), \(K_\delta\), and \(M(\delta)\) are given in \eqref{mu_0}, \eqref{K}, and \eqref{M}, respectively. The constant \(C>0\) depends only on \(\mu_0,\lambda_1\).
\end{lemma}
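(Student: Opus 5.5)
We differentiate \(I_2(t)=(u_t(t),u(t))\) along the solution from Theorem~\ref{thmwellposedness}. Since \(u_{tt}\in L^\infty(0,\infty;V_0)\) and \(u\in L^\infty(0,\infty;V_2)\), \(I_2\) is absolutely continuous and, for a.e.\ \(t\), \eqref{abstract} gives
\[
\frac{d}{dt}I_2(t)=\|u_t\|^2-\left\|A^{\frac12}u\right\|^2+\left(\int_0^t g(t-s)A^{\frac12}u(s)ds,A^{\frac12}u(t)\right)-\|u_t\|^{\rho}(u_t,u).
\]

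For the memory term we write \(A^{\frac12}u(s)=A^{\frac12}u(t)-(A^{\frac12}u(t)-A^{\frac12}u(s))\). This gives
\[
\left(\int_0^t g(t-s)A^{\frac12}u(s)ds,A^{\frac12}u(t)\right)=\widetilde G(t)\left\|A^{\frac12}u\right\|^2-\left(\int_0^t g(t-s)\bigl(A^{\frac12}u(t)-A^{\frac12}u(s)\bigr)ds,A^{\frac12}u(t)\right).
\]
Since \(\widetilde G(t)\le G_0\), the first two contributions combine to at most \(-(1-G_0)\|A^{\frac12}u\|^2=-\mu_0\|A^{\frac12}u\|^2\).

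The cross term we bound by Young's inequality with weight \(\mu_0/4\):
\[
\left|\left(\int_0^t g(t-s)\bigl(A^{\frac12}u(t)-A^{\frac12}u(s)\bigr)ds,A^{\frac12}u(t)\right)\right|\le\frac{\mu_0}{4}\left\|A^{\frac12}u\right\|^2+\frac{1}{\mu_0}\left\|\int_0^t g(t-s)\bigl(A^{\frac12}u(t)-A^{\frac12}u(s)\bigr)ds\right\|^2.
\]
The last norm is then controlled exactly as in \eqref{Kd}, by \(M(\delta)\int_0^tK_\delta(t-s)\|A^{\frac12}u(t)-A^{\frac12}u(s)\|^2ds\). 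This yields the term \(CM(\delta)\int_0^t K_\delta(\cdots)\) with \(C=1/\mu_0\).

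For the damping term we use \eqref{embed} with \(s_1=1\), \(s_2=0\) and Young's inequality:
\[
\|u_t\|^{\rho}|(u_t,u)|\le\|u_t\|^{\rho+1}\lambda_1^{-\frac12}\left\|A^{\frac12}u\right\|\le\frac{\mu_0}{4}\left\|A^{\frac12}u\right\|^2+\frac{1}{\mu_0\lambda_1}\|u_t\|^{2\rho+2}.
\]
When \(u_t=0\) the term is simply zero, since \(\rho+1>0\), so no singularity arises for negative \(\rho\).

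Collecting terms, the coefficient of \(\|A^{\frac12}u\|^2\) is at most \(-\mu_0+\frac{\mu_0}{4}+\frac{\mu_0}{4}=-\frac{\mu_0}{2}\). The coefficient of \(\|u_t\|^2\) is \(1\), and the remaining constants depend only on \(\mu_0\) and \(\lambda_1\), which gives \eqref{I2}.

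The only delicate point is the regrouping of the memory term, so that the good coefficient is \(\mu_0\) rather than \(1\) and the Hölder step \eqref{Kd} applies. The splitting of \(\mu_0\) between the two Young's inequalities must leave at least \(\mu_0/2\). Unlike Lemma~\ref{lemI1}, no restriction \(t\ge t_0\) is needed, because we only use \(\widetilde G(t)\le G_0\).
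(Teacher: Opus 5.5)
Your proof is correct and follows the paper's argument essentially step for step. You use the same regrouping of the memory term with $\widetilde G(t)\le G_0$ to extract the coefficient $-\mu_0$, the same two Young's inequalities (with weight $\mu_0/4$ each, where the paper takes $\mu_0/8$ and keeps $3\mu_0/4$ before weakening to $\mu_0/2$), and the same Hölder bound \eqref{Kd}.
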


\begin{proof}
From \eqref{abstract}, Young's inequality, and \eqref{embed} with \(s_1=1\) and \(s_2=0\), we deduce that, for any \(\epsilon>0\),
\begin{align*}
\frac{d}{dt}I_2(t)
&=\left(u_{tt}(t),u(t)\right)+\|u_t(t)\|^2\\
&=-\left(u(t),Au(t)\right)\\
&\quad+\left(\int_{0}^{t}g(t-s)Au(s)ds,u(t)\right)-\left(\|u_t(t)\|^{\rho}u_t(t),u(t)\right)+\|u_t(t)\|^2\\
&=-\left\|A^{\frac12}u(t)\right\|^2\\
&\quad+\left(\int_{0}^{t}g(t-s)A^{\frac12}u(s)ds,A^{\frac12}u(t)\right)-\left(\|u_t(t)\|^{\rho}u_t(t),u(t)\right)+\|u_t(t)\|^2\\
&=-\left\|A^{\frac12}u(t)\right\|^2-\left(\|u_t(t)\|^{\rho}u_t(t),u(t)\right)+\|u_t(t)\|^2\\
&\quad+\left(\int_{0}^{t}g(t-s)(A^{\frac12}u(s)-A^{\frac12}u(t))ds,A^{\frac12}u(t)\right)+\int_{0}^{t}g(s)ds\|A^{\frac12}u(t)\|^2.
\end{align*}
For the nonlocal damping term, by \(\|u(t)\|\le\lambda_1^{-1/2}\|A^{1/2}u(t)\|\) and Young's inequality,
\[
\left|\left(\|u_t(t)\|^{\rho}u_t(t),u(t)\right)\right|
\le \epsilon\|A^{\frac12}u(t)\|^2+C(\epsilon)\|u_t(t)\|^{2\rho+2},
\]
where \(C(\epsilon)\) depends only on \(\epsilon,\lambda_1\). For the memory term, by Young's inequality,
\begin{align*}
&\left|\left(\int_{0}^{t}g(t-s)(A^{\frac12}u(s)-A^{\frac12}u(t))ds,A^{\frac12}u(t)\right)\right|\\
&\qquad \le \epsilon\|A^{\frac12}u(t)\|^2+C(\epsilon)\left\|\int_{0}^{t}g(t-s)(A^{\frac12}u(t)-A^{\frac12}u(s))ds\right\|^2.
\end{align*}
Combining these estimates, we obtain
\begin{align*}
\frac{d}{dt}I_2(t)
&\leq-\left(1-\int_0^t g(s)ds-2\epsilon\right)\left\|A^{\frac12}u(t)\right\|^2+\|u_t(t)\|^2\\
&\quad+C(\epsilon)\|u_t(t)\|^{2\rho+2}
+C(\epsilon)\left\|\int_{0}^{t}g(t-s)(A^{\frac12}u(t)-A^{\frac12}u(s))ds\right\|^2.
\end{align*}
By \eqref{mu_0}, we may choose \(\epsilon=\frac{\mu_0}{8}\) so that
\[
1-\int_0^t g(s)ds-2\epsilon\geq\mu_0-2\epsilon\geq\frac{3\mu_0}{4}\geq\frac{\mu_0}{2}.
\]
Combining this with \eqref{Kd}, which gives
\[
\left\|\int_0^t g(t-s)(A^{1/2}u(t)-A^{1/2}u(s))ds\right\|^2
\le M(\delta)\int_0^t K_\delta(t-s)\left\|A^{1/2}u(t)-A^{1/2}u(s)\right\|^2ds,
\]
we obtain \eqref{I2}, with a constant \(C\) depending only on \(\mu_0,\lambda_1\).
\end{proof}

\begin{lemma}\label{lem2.4}
Assume that (AOP) and (AG) hold. The functional
\begin{align}
I_3(t):=\int_{0}^{t}G(t-s)\left\|A^{\frac12}u(s)\right\|^2ds,\label{I3t}
\end{align}
with \(G(t)=\int_{t}^{\infty}g(s)ds\) defined in \eqref{G}, satisfies
\begin{align}
\frac{d}{dt}I_3(t)=-\int_{0}^{t}g(t-s)\left\|A^{\frac12}u(s)\right\|^2ds+G_0\left\|A^{\frac12}u(t)\right\|^2,\label{I3d}
\end{align}
and
\begin{align}
\frac{d}{dt}I_3(t)\leq-\frac12\int_{0}^{t}g(t-s)\left\|A^{\frac12}u(t)-A^{\frac12}u(s)\right\|^2ds+3G_0\left\|A^{\frac12}u(t)\right\|^2,\label{I3}
\end{align}
where \(G_0\) is given in \eqref{mu_0}.
\end{lemma}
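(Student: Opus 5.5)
The identity \eqref{I3d} will come from differentiating under the integral sign, and \eqref{I3} from a pointwise Young-type inequality for the memory kernel.

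\textbf{Step 1: the identity \eqref{I3d}.} Since \(G(0)=G_0\) and \(G'(t)=-g(t)\) almost everywhere, Leibniz's rule applied to \eqref{I3t} gives
\[
\frac{d}{dt}I_3(t)=G(0)\left\|A^{\frac12}u(t)\right\|^2+\int_0^t G'(t-s)\left\|A^{\frac12}u(s)\right\|^2ds,
\]
which is exactly \eqref{I3d}. To justify the differentiation I will use three facts:
\begin{enumerate}
\item \(s\mapsto\|A^{1/2}u(s)\|^2\) is continuous and bounded, because \(u\in C([0,\infty);V_1)\) by Theorem \ref{thmwellposedness};
\item \(G\) is absolutely continuous, with \(G'=-g\in L^1\) and \(g\) bounded by \(g(0)\);
\item the substitution \(\sigma=t-s\) writes \(I_3(t)=\int_0^t G(\sigma)\|A^{1/2}u(t-\sigma)\|^2d\sigma\).
\end{enumerate}
Alternatively, I can write \(G(t-s)=G_0-\int_0^{t-s}g\) and use Fubini to express \(I_3\) as a difference of absolutely continuous functions.

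\textbf{Step 2: the inequality \eqref{I3}.} I will bound \(-\|A^{1/2}u(s)\|^2\) from above. Write \(a=A^{1/2}u(t)\) and \(b=A^{1/2}u(s)\). From \(\|a-b\|^2\le 2\|a\|^2+2\|b\|^2\) we get
\[
-\|b\|^2\le-\frac12\|a-b\|^2+\|a\|^2 .
\]
Multiplying by \(g(t-s)\ge0\) and integrating over \((0,t)\) yields
\[
-\int_0^t g(t-s)\|A^{\frac12}u(s)\|^2ds\le-\frac12\int_0^t g(t-s)\left\|A^{\frac12}u(t)-A^{\frac12}u(s)\right\|^2ds+\int_0^t g(\sigma)d\sigma\,\|A^{\frac12}u(t)\|^2 .
\]
The last integral is at most \(G_0\). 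Inserting this into \eqref{I3d} gives the bound with constant \(2G_0\), which is at most \(3G_0\). So \eqref{I3} holds, in fact with room to spare.

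\textbf{Main obstacle.} The inequality is elementary, so the only delicate point is the rigorous differentiation in Step 1, which needs minimal regularity. Continuity of \(u\) in \(V_1\) together with \(g\in L^1\cap L^\infty\) suffices: the integrand \(G(t-s)\|A^{1/2}u(s)\|^2\) is continuous in \(s\) and absolutely continuous in \(t\), with derivative in \(t\) bounded by \(g(0)\sup\|A^{1/2}u\|^2\). The identity \eqref{I3d} therefore holds for almost every \(t\), which is all that is needed later.
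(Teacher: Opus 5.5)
Your proof is correct. Step 1 is the same as the paper's, which just invokes $G'=-g$ and $G(0)=G_0$; your justification of the differentiation is more careful. In fact, since $g$ is continuous under (AG), $G\in C^1$ and \eqref{I3d} holds for every $t$, not only almost every $t$.

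For \eqref{I3} you take a different and more elementary route. The paper proceeds as follows:
\begin{enumerate}
\item It expands $\|A^{1/2}u(t)-A^{1/2}u(s)\|^2$ and uses $G_0=\int_0^t g(\sigma)d\sigma+G(t)$ to isolate the cross term $-2\bigl(A^{1/2}u(t),\int_0^t g(t-s)(A^{1/2}u(s)-A^{1/2}u(t))ds\bigr)$.
\item It bounds this term by Young's inequality combined with the weighted Cauchy--Schwarz estimate $\bigl\|\int_0^t g(t-s)w(s)ds\bigr\|^2\le \int_0^t g(\sigma)d\sigma\cdot\int_0^t g(t-s)\|w(s)\|^2ds$.
\item This gives the coefficient $G(t)+2G_0\le 3G_0$ in front of $\|A^{1/2}u(t)\|^2$.
\end{enumerate}

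You instead apply the pointwise inequality $-\|b\|^2\le-\frac12\|a-b\|^2+\|a\|^2$ before integrating against $g(t-s)$. This avoids the cross term and the Cauchy--Schwarz step entirely. It also gives the sharper coefficient $G_0+\int_0^t g(\sigma)d\sigma\le 2G_0$.

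Both arguments give the same factor $-\frac12$ in front of the memory term. Both can also be tuned to push that factor toward $-1$ at the cost of a larger multiple of $\|A^{1/2}u(t)\|^2$. In the paper this is done by changing the Young parameter. In your version it is done via $\|a-b\|^2\le(1+\epsilon)\|b\|^2+(1+\epsilon^{-1})\|a\|^2$, which still gives the better constant $G_0+\epsilon^{-1}\int_0^t g(\sigma)d\sigma$.

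Since your bound implies \eqref{I3}, nothing downstream is affected, including Lemma~\ref{lem2.5} and the later choice of constants.
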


\begin{proof}
Since \(G'(t-s)=-g(t-s)\) and \(G(0)=\int_0^{\infty}g(s)ds=G_0\), identity \eqref{I3d} follows directly.

To prove \eqref{I3}, we rewrite \eqref{I3d} as
\begin{align*}
\frac{d}{dt}I_3(t)
&=-\int_{0}^{t}g(t-s)\left\|A^{\frac12}u(t)-A^{\frac12}u(s)\right\|^2ds+\int_{0}^{t}g(s)ds\left\|A^{\frac12}u(t)\right\|^2\\
&\quad-2\left(\int_{0}^{t}g(t-s)A^{\frac12}u(s)ds,A^{\frac12}u(t)\right)+G_0\left\|A^{\frac12}u(t)\right\|^2.
\end{align*}
Using
\[
G_0=\int_{0}^{\infty}g(s)ds=\int_{0}^{t}g(s)ds+G(t),
\]
we obtain
\begin{align*}
\frac{d}{dt}I_3(t)
&=-\int_{0}^{t}g(t-s)\left\|A^{\frac12}u(t)-A^{\frac12}u(s)\right\|^2ds+G(t)\left\|A^{\frac12}u(t)\right\|^2\notag\\
&\quad-2\left(A^{\frac12}u(t),\int_{0}^{t}g(t-s)(A^{\frac12}u(s)-A^{\frac12}u(t))ds\right).
\end{align*}
By Young's inequality and H\"older's inequality,
\begin{align*}
&-2\left(A^{\frac12}u(t),\int_{0}^{t}\sqrt{g(t-s)}\sqrt{g(t-s)}\left(A^{\frac12}u(s)-A^{\frac12}u(t)\right)ds\right)\\
&\leq2G_0\left\|A^{\frac12}u(t)\right\|^2+\frac1{2G_0}\underbrace{\int_0^tg(s)ds}_{\leq G_0}\int_{0}^{t}g(t-s)\left\|A^{\frac12}u(t)-A^{\frac12}u(s)\right\|^2ds\\
&\leq2G_0\left\|A^{\frac12}u(t)\right\|^2+\frac{1}{2}\int_{0}^{t}g(t-s)\left\|A^{\frac12}u(t)-A^{\frac12}u(s)\right\|^2ds.
\end{align*}
Since \(G(t)\leq G_0\), estimate \eqref{I3} follows.
\end{proof}

\begin{lemma}\label{lem2.5}
Assume that (AOP) and (AG) hold. The functional
\[
I_4(t):=M(\delta)\left(\delta\int_{0}^{t}G(t-s)\|A^{\frac12}u(s)\|^2ds+E(t)\right)
\]
satisfies
\begin{align}
\frac{d}{dt}I_4(t)\leq-\frac12M(\delta)\int_{0}^{t}K_{\delta}(t-s)\left\|A^{\frac12}u(t)-A^{\frac12}u(s)\right\|^2ds\notag\\
+3G_0\delta M(\delta)\|A^{\frac12}u(t)\|^2,\label{I4}
\end{align}
where \(E\) is defined in \eqref{energy}, and \(G_0\), \(K_\delta\), and \(M(\delta)\) are given in \eqref{mu_0}, \eqref{K}, and \eqref{M}, respectively.
\end{lemma}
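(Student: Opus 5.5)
Since $I_4$ is $M(\delta)$ times a linear combination of $E$ and $I_3$, the plan is to compute $\frac{d}{dt}I_4=M(\delta)\bigl(\delta\frac{d}{dt}I_3+E'\bigr)$, take $\frac{d}{dt}I_3$ from Lemma \ref{lem2.4}, and write down an energy identity for $E$.

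\textbf{Energy identity.} First I derive the identity for the limit solution. Take the multiplier $u_t$ in \eqref{abstract}; this is legitimate because $u_t\in L^\infty(0,\infty;V_1)$ and $u_{tt}\in L^\infty(0,\infty;V_0)$ by Theorem \ref{thmwellposedness}. Then use the memory identity \eqref{import} with $u^n$ replaced by $u$. This gives, for a.e. $t$,
\begin{align*}
E'(t)=-\|u_t(t)\|^{\rho+2}-\frac12g(t)\left\|A^{\frac12}u(t)\right\|^2+\frac12\int_0^tg'(t-s)\left\|A^{\frac12}u(t)-A^{\frac12}u(s)\right\|^2ds.
\end{align*}
The first two terms are nonpositive, so I drop them:
\[
E'(t)\le\frac12\int_0^tg'(t-s)\left\|A^{\frac12}u(t)-A^{\frac12}u(s)\right\|^2ds.
\]

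\textbf{Memory bound.} Next I use \eqref{I3} from Lemma \ref{lem2.4}, multiplied by $\delta$:
\[
\delta\frac{d}{dt}I_3\le-\frac{\delta}{2}\int_0^tg(t-s)\left\|A^{\frac12}u(t)-A^{\frac12}u(s)\right\|^2ds+3G_0\delta\left\|A^{\frac12}u(t)\right\|^2.
\]

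\textbf{Combining.} Adding the two bounds, the integral terms become
\[
\frac12\int_0^t\bigl(g'(t-s)-\delta g(t-s)\bigr)\left\|A^{\frac12}u(t)-A^{\frac12}u(s)\right\|^2ds=-\frac12\int_0^tK_\delta(t-s)\left\|A^{\frac12}u(t)-A^{\frac12}u(s)\right\|^2ds,
\]
by the definition \eqref{K} of $K_\delta$. Multiplying the sum by $M(\delta)>0$ gives \eqref{I4} exactly.

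\textbf{Main obstacle.} The only delicate point is justifying the energy identity for the limit solution rather than for the Galerkin approximations. The regularity from Theorem \ref{thmwellposedness} makes $t\mapsto\|u_t\|^2$ and $t\mapsto\|A^{1/2}u\|^2$ absolutely continuous, so $E$ is differentiable a.e. and the computation above holds a.e. The memory identity requires $g$ locally absolutely continuous, which is part of (AG). Alternatively, one can pass to the limit in \eqref{energyidentity} using the strong convergence \eqref{bc5}.
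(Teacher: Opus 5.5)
Your proposal is correct and follows the paper's proof: you drop the nonpositive terms in the energy identity \eqref{ed}, add $\delta$ times \eqref{I3}, and recognize $-g'+\delta g=K_\delta$. Your extra justification of \eqref{ed} for the limit solution, using the regularity from Theorem \ref{thmwellposedness}, covers a point the paper passes over without comment.
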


\begin{proof}
From \eqref{energy}, we have
\begin{equation}\label{ed}
E'(t)=-\frac12g(t)\left\|A^{\frac12}u\right\|^2-\|u_t\|^{\rho+2}+\frac12\int_0^tg'(t-s)\left\|A^{\frac12}u(t)-A^{\frac12}u(s)\right\|^2ds,
\end{equation}
which implies
\[
E'(t)\leq\frac12\int_{0}^{t}g'(t-s)\left\|A^{\frac12}u(t)-A^{\frac12}u(s)\right\|^2ds.
\]
Combining this with \eqref{I3}, we obtain
\begin{align*}
\frac{d}{dt}I_4(t)
&=M(\delta)(\delta I'_3(t)+E'(t))\\
&\leq-\frac{M(\delta)}{2}\int_{0}^{t}\delta g(t-s)\left\|A^{\frac12}u(t)-A^{\frac12}u(s)\right\|^2ds+3M(\delta)\delta G_0\left\|A^{\frac12}u(t)\right\|^2\\
&\quad+\frac{M(\delta)}{2}\int_{0}^{t}g'(t-s)\left\|A^{\frac12}u(t)-A^{\frac12}u(s)\right\|^2ds\\
&=-\frac12M(\delta)\int_{0}^{t}K_\delta(t-s)\left\|A^{\frac12}u(t)-A^{\frac12}u(s)\right\|^2ds+3M(\delta)\delta G_0\left\|A^{\frac12}u(t)\right\|^2,
\end{align*}
where we used \(K_\delta(t-s)=\delta g(t-s)-g'(t-s)\). This proves \eqref{I4}.
\end{proof}

\begin{proof}[Proof of Theorem \ref{main} for the case \(-1<\rho<0\)]
Define
\[
J(t):=N_1I_1(t)+N_2I_2(t)+I_3(t)+N_4I_4(t),
\]
where \(I_i\), \(i=1,2,3,4\), are the functionals defined above, and \(N_1,N_2,N_4\) are positive constants to be determined.

Using \eqref{ed}, \eqref{I1esti}, \eqref{I2}, \eqref{I3}, and \eqref{I4}, we obtain, for \(t\geq t_0\),
\begin{align*}
\frac{d}{dt}J(t)
&\leq\left(N_1\varepsilon_1-\frac{N_2\mu_0}{2}+3G_0+3N_4G_0\delta M(\delta)\right)\left\|A^{\frac12}u(t)\right\|^2
+\left(-\frac{N_1G_0}{4}+N_2\right)\|u_t(t)\|^2\\
&\quad+\left(N_1\hat C(\varepsilon_1)M(\delta)+N_2CM(\delta)-\frac{N_4M(\delta)}{2}\right)\int_{0}^{t}K_{\delta}(t-s)\left\|A^{\frac12}u(t)-A^{\frac12}u(s)\right\|^2ds\\
&\quad-\frac12\int_{0}^{t}g(t-s)\left\|A^{\frac12}u(t)-A^{\frac12}u(s)\right\|^2ds+(N_1+N_2)C\|u_t(t)\|^{2\rho+2}\\
&\quad-N_1\frac{g(0)}{\lambda_1G_0}\int_{0}^{t}g'(t-s)\left\|A^{\frac12}u(t)-A^{\frac12}u(s)\right\|^2ds.
\end{align*}
Here the coefficient of \(\|A^{1/2}u\|^2\) is obtained by summing the contributions
\[
N_1\varepsilon_1,\qquad -\frac{N_2\mu_0}{2},\qquad 3G_0,\qquad 3N_4G_0\delta M(\delta),
\]
from \(I_1,I_2,I_3,I_4\), respectively.

By (AG), for almost every \(s\in(0,\infty)\),
\[
\frac{\delta g^2(s)}{-g'(s)+\delta g(s)}\leq g(s)\in L^1(0,\infty).
\]
Hence, by \eqref{K}, \eqref{M}, and the dominated convergence theorem,
\[
\lim_{\delta\to 0}\delta M(\delta)=\lim_{\delta\to 0}\int_0^{\infty}\frac{\delta g^2(s)}{-g'(s)+\delta g(s)}ds=0.
\]
We may therefore choose \(\delta\) small enough such that
\begin{equation}\label{deltamdelta}
3N_4G_0\delta M(\delta)\leq\frac{N_2\mu_0}{8}.
\end{equation}
Moreover, choosing
\[
\varepsilon_1=\frac{N_2\mu_0}{8N_1},\qquad
N_2=\frac{48G_0}{\mu_0},\qquad
N_1=\frac{4N_2+1}{G_0},\qquad
N_4=2(N_1\hat C(\varepsilon_1)+N_2C)+1,
\]
we obtain
\[
N_1\varepsilon_1=\frac{N_2\mu_0}{8}=6G_0,\qquad
3N_4G_0\delta M(\delta)\leq\frac{N_2\mu_0}{8}=6G_0.
\]
Hence
\[
\frac{N_2\mu_0}{2}-N_1\varepsilon_1-3G_0-3N_4G_0\delta M(\delta)
\geq24G_0-6G_0-3G_0-6G_0=9G_0>0.
\]
Therefore, for \(t\geq t_0\),
\begin{align*}
\frac{d}{dt}J(t)
&\leq-9G_0\|A^{\frac12}u(t)\|^2-\frac14\|u_t(t)\|^2\\
&\quad-\frac12\int_{0}^{t}g(t-s)\left\|A^{\frac12}u(t)-A^{\frac12}u(s)\right\|^2ds+(N_1+N_2)C\|u_t(t)\|^{2\rho+2}\\
&\quad-N_1\frac{g(0)}{\lambda_1G_0}\int_{0}^{t}g'(t-s)\left\|A^{\frac12}u(t)-A^{\frac12}u(s)\right\|^2ds.
\end{align*}
Since
\[
E(t)\geq\frac12\|u_t\|^2+\frac{\mu_0}{2}\|A^{\frac12}u\|^2+\frac12\int_{0}^{t}g(t-s)\left\|A^{\frac12}u(t)-A^{\frac12}u(s)\right\|^2ds,
\]
\[
-E'(t)\geq\|u_t(t)\|^{\rho+2},\qquad
-E'(t)\geq-\frac12\int_{0}^{t}g'(t-s)\left\|A^{\frac12}u(t)-A^{\frac12}u(s)\right\|^2ds,
\]
and, since \(-1<\rho<0\), we have \(\rho+2\in(1,2)\), so that
\[
\|u_t(t)\|^{2\rho+2}\le C(-E'(t))^{\frac{2\rho+2}{\rho+2}}.
\]
Indeed, by \eqref{ed} and \(g'\le0\),
\[
-E'(t)\ge\|u_t(t)\|^{\rho+2},
\]
and raising both sides to the power \(\frac{2\rho+2}{\rho+2}>0\) gives the claim. Hence we deduce, for \(t\geq t_0\),
\begin{align}
\frac{d}{dt}J(t)
&\leq-CE(t)-C\int_{0}^{t}g(t-s)\left\|A^{\frac12}u(s)\right\|^2ds-CE'(t)+C(-E'(t))^{\frac{2\rho+2}{\rho+2}}.\label{J}
\end{align}
Here the constant \(C>0\) depends only on \(\rho,\mu_0,\lambda_1,G_0,g(0),N_1,N_2,N_4,\hat C(\varepsilon_1)\).

Set
\[
L(t):=NE(t)+J(t)=NE(t)+N_1I_1(t)+N_2I_2(t)+I_3(t)+N_4I_4(t),
\]
where \(N\) is a positive constant to be determined. By the definitions of \(E\) and \(I_1\), Young's inequality, H\"older's inequality, and \eqref{embed} with \(s_1=1\) and \(s_2=0\), we have
\begin{align*}
|I_1(t)|
&=\left|\left(u_t(t),\int_{0}^{t}g(t-s)(u(t)-u(s))ds\right)\right|\\
&\leq C\|u_t(t)\|^2+C\left\|\int_{0}^{t}\sqrt{g(t-s)}\sqrt{g(t-s)}\left(u(t)-u(s)\right)ds\right\|^2\\
&\leq C\|u_t(t)\|^2+C\underbrace{\int_0^{\infty}g(s)ds}_{=G_0}\int_{0}^{t}g(t-s)\left\|A^{\frac12}u(t)-A^{\frac12}u(s)\right\|^2ds\\
&\leq CE(t),
\end{align*}
where \(C\) depends only on \(G_0,\lambda_1\). Similarly, \(|I_2(t)|\leq CE(t)\), with \(C\) depending only on \(\lambda_1\). Moreover, \(I_3(t),I_4(t)\geq0\). Hence, choosing \(N\) sufficiently large, we obtain
\begin{equation}\label{Lda}
L(t)\geq CE(t),
\end{equation}
where the constant \(C>0\) depends on \(N_1,N_2,G_0,\lambda_1\).

By (AG), \(\lim_{t\to\infty}g(t)=0\), and \(g'(t)\leq-kg(t)\) for almost every \(t\in[0,\infty)\). Thus, by \eqref{G},
\[
G(t)=\int_t^{\infty}g(s)ds\leq-\frac{1}{k}\int_t^{\infty}g'(s)ds=\frac1k g(t).
\]
Using the definitions of \(I_3\) and \(I_4\), together with \eqref{deltamdelta}, we get
\[
I_3(t)\leq C\int_{0}^{t}g(t-s)\left\|A^{\frac12}u(s)\right\|^2ds,\qquad
I_4(t)\leq CE(t)+C\int_{0}^{t}g(t-s)\left\|A^{\frac12}u(s)\right\|^2ds.
\]
Since
\begin{align*}
\int_{0}^{t}g(t-s)\left\|A^{\frac12}u(s)\right\|^2ds
&=\int_{0}^{t}g(t-s)\left\|A^{\frac12}u(s)-A^{\frac12}u(t)+A^{\frac12}u(t)\right\|^2ds\\
&\leq2\int_{0}^{t}g(t-s)\left\|A^{\frac12}u(s)-A^{\frac12}u(t)\right\|^2ds\\
&\quad+2\int_{0}^{t}g(t-s)ds\left\|A^{\frac12}u(t)\right\|^2\\
&\leq CE(t),
\end{align*}
we obtain
\begin{align}
C_1E(t)\leq L(t)\leq C_2E(t)\label{dj}
\end{align}
for some constants \(C_1,C_2>0\) depending only on \(\rho,\mu_0,\lambda_1,G_0,g(0),k\) and on \(M(\delta)\).

From \eqref{J} and \eqref{dj}, for \(t\geq t_0\), and choosing \(N\) sufficiently large so that \(N-C\geq0\), we get
\begin{align}
\frac{d}{dt}L(t)
&=NE'(t)+J'(t)\notag\\
&\leq-CE(t)-C\int_{0}^{t}g(t-s)\left\|A^{\frac12}u(s)\right\|^2ds\notag\\
&\quad+(N-C)E'(t)+C(-E'(t))^{\frac{2\rho+2}{\rho+2}}\notag\\
&\leq-CL(t)+C(-E'(t))^{\frac{2\rho+2}{\rho+2}}.\label{betterrs}
\end{align}
Since \(-1<\rho<0\), multiplying \eqref{betterrs} by \(L^{\frac{-\rho}{2\rho+2}}(t)\) and integrating over \([t,T]\) with \(T>t\geq t_0\), we obtain, by Young's inequality,
\begin{align*}
\frac{2\rho+2}{\rho+2}\left(L^{\frac{\rho+2}{2\rho+2}}(T)-L^{\frac{\rho+2}{2\rho+2}}(t)\right)
&\leq-C\int_{t}^{T}L^{\frac{\rho+2}{2\rho+2}}(s)ds+C\int_{t}^{T}L^{\frac{-\rho}{2\rho+2}}(s)(-E'(s))^{\frac{2\rho+2}{\rho+2}}ds\\
&\leq-C\int_{t}^{T}L^{\frac{\rho+2}{2\rho+2}}(s)ds\\
&\quad+\frac{C}{2}\int_{t}^{T}L^{\frac{\rho+2}{2\rho+2}}(s)ds+\mathcal{C}(\rho)\int_{t}^{T}(-E'(s))ds,
\end{align*}
which, together with \eqref{dj}, implies
\begin{align*}
\int_{t}^{T}L^{\frac{\rho+2}{2\rho+2}}(s)ds
&\leq CL^{\frac{\rho+2}{2\rho+2}}(t)+C\int_{t}^{T}(-E'(s))ds\\
&\leq CL^{\frac{\rho+2}{2\rho+2}}(t)+CE(t)
\leq C\left(L^{\frac{\rho+2}{2\rho+2}-1}(t)+1\right)L(t)
\\
&=C\left(L^{\frac{-\rho}{2\rho+2}}(t)+1\right)L(t).
\end{align*}
Hence, since \(L(t)\leq C_2E(t)\leq C_2E(0)\),
\[
\int_{t}^{T}L^{\frac{\rho+2}{2\rho+2}}(s)ds\leq C\left(1+E(0)^{\frac{-\rho}{2\rho+2}}\right)L(t).
\]
By \eqref{dj},
\begin{align}
\int_{t}^{T}E^{\frac{\rho+2}{2\rho+2}}(s)ds\leq C\left(1+E(0)^{\frac{-\rho}{2\rho+2}}\right)E(t)\leq C\left(1+E(0)^{\frac{-\rho}{2\rho+2}}\right)E(0).\label{case1plo}
\end{align}
Taking \(T=2t\), and using \(E'(t)\leq0\), we get
\[
[E(2t)]^{\frac{\rho+2}{2\rho+2}}t\leq C\left(1+E(0)^{\frac{-\rho}{2\rho+2}}\right)E(0),
\]
so that
\[
E(2t)\leq(2C)^{\frac{2\rho+2}{\rho+2}}\left(1+E(0)^{\frac{-\rho}{2\rho+2}}\right)^{\frac{2\rho+2}{\rho+2}}E(0)^{\frac{2\rho+2}{\rho+2}}(2t)^{-\frac{2\rho+2}{\rho+2}}.
\]
Repeating the above iteration \(m\) times yields
\begin{align}
E(t)\leq t^{-\sum\limits_{i=1}^{m}\left(\frac{2\rho+2}{\rho+2}\right)^i}E^{\left(\frac{2\rho+2}{\rho+2}\right)^m}(0)C^{\sum\limits_{i=1}^{m}\left(\frac{2\rho+2}{\rho+2}\right)^i}\left(1+E(0)^{\frac{-\rho}{2\rho+2}}\right)^{\sum\limits_{i=1}^{m}\left(\frac{2\rho+2}{\rho+2}\right)^i}2^{\sum\limits_{i=1}^{m}i\left(\frac{2\rho+2}{\rho+2}\right)^i}.\label{case1plo01}
\end{align}
Let
\[
q:=\frac{2\rho+2}{\rho+2}.
\]
Since \(-1<\rho<0\), we have \(0<q<1\). Therefore, as \(m\to\infty\),
\[
\sum\limits_{i=1}^{m}q^i\to\frac{q}{1-q}=-\frac{2\rho+2}{\rho},\qquad
q^m\to0,\qquad
\sum\limits_{i=1}^{m}iq^i\to\frac{q}{(1-q)^2}=\frac{(2\rho+2)(\rho+2)}{\rho^2}.
\]
Letting \(m\to\infty\) in \eqref{case1plo01}, we conclude
\[
E(t)\leq t^{\frac{2\rho+2}{\rho}}C^{-\frac{2\rho+2}{\rho}}\left(1+E(0)^{\frac{-\rho}{2\rho+2}}\right)^{-\frac{2\rho+2}{\rho}}2^{\frac{(2\rho+2)(\rho+2)}{\rho^2}}.
\]
Thus there exists a constant
\[
C(\rho,\mu_0,\lambda_1,g(0),G_0,k)>0
\]
such that
\[
E(t)\leq C(\rho,\mu_0,\lambda_1,g(0),G_0,k)
\left(1+E(0)^{\frac{-\rho}{2\rho+2}}\right)^{-\frac{2\rho+2}{\rho}}
t^{\frac{2\rho+2}{\rho}},\qquad t\ge t_0.
\]
Since \(-\frac{2\rho+2}{\rho}=\frac{2\rho+2}{|\rho|}>0\) and
\(-\frac{\rho}{2\rho+2}\in(0,\frac12)\), we have
\[
E(0)^{\frac{-\rho}{2\rho+2}}\le 1+E(0)^{-1},
\]
and hence
\[
\left(1+E(0)^{-\frac{\rho}{2\rho+2}}\right)^{\frac{2\rho+2}{|\rho|}}
\le C(\rho)\left(1+E(0)^{-1}\right)^{\frac{2\rho+2}{|\rho|}}.
\]
Consequently,
\[
E(t)\leq C(\rho,\mu_0,\lambda_1,g(0),G_0,k)
\left(1+E(0)^{-1}\right)^{\frac{2\rho+2}{|\rho|}}
t^{\frac{2\rho+2}{\rho}},\qquad t\ge t_0.
\]
This completes the proof for \(-1<\rho<0\).
\end{proof}

\noindent\textbf{The case $\bm{\rho\geq0}$.} In this part we prove Theorem \ref{main} for \(\rho\geq0\). We shall use the following standard lemma, which can be found in \cite{MR0499564,MR0513082}.

\begin{lemma}\label{lemmp}
Let \(\phi:[0,\infty)\to\mathbb{R}\) be a nonnegative and bounded function satisfying
\[
\sup_{t\leq s\leq t+1}[\phi(s)]^{1+\sigma}\leq c_0[\phi(t)-\phi(t+1)],\qquad t\geq0,
\]
where \(c_0>0\) and \(\sigma\geq0\) are constants. Then:
\begin{description}
\item[(i)] if \(\sigma>0\),
\[
\phi(t)\leq\left[\left(\sup_{0\leq s\leq1}\phi(s)\right)^{-\sigma}+\frac{\sigma}{c_0}(t-1)^+\right]^{-1/\sigma},\qquad t\geq0;
\]
\item[(ii)] if \(\sigma=0\),
\[
\phi(t)\leq\left(\sup_{0\leq s\leq1}\phi(s)\right)e^{-\kappa(t-1)^+},\qquad t\geq0,
\]
where \(\kappa=-\log(1-1/c_0)\). Here \(s^+:=\frac12(s+|s|)\).
\end{description}
\end{lemma}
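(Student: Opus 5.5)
The plan is to run Nakao's difference-inequality argument on the unit interval \([t,t+1]\), with \(\phi(t)=\sigma\)-free quantity \(\phi(t)=E(t)\). The lemma to prove is of this type: \(\phi\) is nonnegative, bounded, and satisfies \(\sup_{t\le s\le t+1}\phi(s)^{1+\sigma}\le c_0(\phi(t)-\phi(t+1))\); we want the stated decay.

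\textbf{Step 1: preliminary reductions.}

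1. Set \(D_t:=\phi(t)-\phi(t+1)\). The hypothesis gives \(D_t\ge0\), so \(\phi(t+1)\le\phi(t)\).
2. Taking \(s=t+1\) in the supremum gives the pointwise inequality
\[
\phi(t+1)^{1+\sigma}\le c_0\bigl(\phi(t)-\phi(t+1)\bigr).
\]
3. Taking \(s=t\) gives \(\phi(t)^{1+\sigma}\le c_0(\phi(t)-\phi(t+1))\le c_0\phi(t)\). Hence if \(\phi(t)>0\) and \(\sigma=0\), then \(c_0\ge1\).
4. If \(\phi(t_*)=0\) for some \(t_*\), then \(\phi(t_*+1)=0\) by step 2. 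Applying step 2 at later integer shifts, the sequence stays zero, and the supremum hypothesis makes \(\phi\) vanish on \([t_*,\infty)\). So both bounds are trivial there, and we may assume \(\phi>0\) wherever needed.

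\textbf{Step 2: case \(\sigma=0\).} The hypothesis with \(s=t+1\) gives \(\phi(t+1)\le c_0(\phi(t)-\phi(t+1))\). This is weaker than needed, so I will use \(s=t\) instead:
\[
\phi(t)\le c_0\bigl(\phi(t)-\phi(t+1)\bigr),
\qquad\text{so}\qquad
\phi(t+1)\le (1-1/c_0)\,\phi(t)=e^{-\kappa}\phi(t).
\]
Now write \(t=\tau+n\) with \(\tau\in[0,1)\) and \(n\) an integer, and iterate:
\[
\phi(t)\le e^{-\kappa n}\phi(\tau)\le e^{-\kappa n}\sup_{[0,1]}\phi .
\]
Since \(n\ge t-1\), and using \(n\ge0\) for \(t<1\), we get \(e^{-\kappa n}\le e^{-\kappa(t-1)^+}\). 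If \(c_0=1\), step 2 forces \(\phi(t+1)=0\) and the claim holds trivially with \(\kappa=\infty\).

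\textbf{Step 3: case \(\sigma>0\).} Again use \(s=t\): \(\phi(t)^{1+\sigma}\le c_0(\phi(t)-\phi(t+1))\). Put \(a=\phi(t)\) and \(b=\phi(t+1)\), with \(0<b\le a\). The key inequality, which I expect to be the main obstacle, is
\[
b^{-\sigma}-a^{-\sigma}\ge \sigma/c_0 .
\]
To get it, use the mean value theorem:
\[
b^{-\sigma}-a^{-\sigma}=\sigma\,\xi^{-\sigma-1}(a-b)\ge \sigma a^{-\sigma-1}(a-b)
\]
for some \(\xi\in[b,a]\). Then \(a-b\ge a^{1+\sigma}/c_0\) gives the claim. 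This is exactly why the supremum (the \(s=t\) choice) matters: it puts the larger value \(a\) on the left.

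Iterating from \(\tau\in[0,1)\) gives
\[
\phi(\tau+n)^{-\sigma}\ge \phi(\tau)^{-\sigma}+n\sigma/c_0\ge \Bigl(\sup_{[0,1]}\phi\Bigr)^{-\sigma}+\frac{\sigma}{c_0}(t-1)^+ .
\]
Raising to the power \(-1/\sigma\) yields (i). Boundedness of \(\phi\) guarantees that \(\sup_{[0,1]}\phi\) is finite.
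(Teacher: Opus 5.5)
Your argument is correct. The paper gives no proof of its own and simply quotes the lemma from Nakao, and your self-contained argument is the standard proof of that difference inequality: use only the $s=t$ instance of the hypothesis, derive $\phi(t+1)\le(1-1/c_0)\phi(t)$ (resp.\ $\phi(t+1)^{-\sigma}-\phi(t)^{-\sigma}\ge\sigma/c_0$ via the mean value theorem), and iterate from $\tau=t-\lfloor t\rfloor\in[0,1)$ using $\lfloor t\rfloor\ge(t-1)^+$.

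The remaining blemishes are cosmetic. The opening phrase identifying $\phi$ with $E$ is garbled and irrelevant to this abstract lemma. In (ii), the formula for $\kappa$ tacitly requires $c_0>1$, and your Steps 1 and 2 correctly show this is forced unless $\phi$ vanishes on $[1,\infty)$.
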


\begin{proof}[Proof of Theorem \ref{main} for the case \(\rho\geq0\)]
From (AG) and \eqref{ed}, we infer that
\begin{align}
\int_{t}^{t+1}\|u_t(s)\|^{\rho+2}ds
&\leq\int_{t}^{t+1}\left(\frac12g(s)\left\|A^{\frac12}u(s)\right\|^2+\|u_t(s)\|^{\rho+2}\right.\notag\\
&\quad\left.-\frac12\int_{0}^{s}g'(s-\tau)\left\|A^{\frac12}u(s)-A^{\frac12}u(\tau)\right\|^2d\tau\right)ds\notag\\
&=\int_{t}^{t+1}-E'(s)ds=E(t)-E(t+1):=[D(t)]^2.\label{nba}
\end{align}
By H\"older's inequality with \(\frac{\rho}{\rho+2}+\frac{2}{\rho+2}=1\),
\begin{align}\label{klll}
\int_{t}^{t+1}\|u_t(s)\|^{2}ds
&\leq\left(\int_{t}^{t+1}ds\right)^{\frac{\rho}{\rho+2}}\left(\int_{t}^{t+1}\|u_t(s)\|^{\rho+2}ds\right)^{\frac{2}{\rho+2}}
\leq[D(t)]^{\frac{4}{\rho+2}}.
\end{align}
By the mean value theorem, there exist \(t_1\in[t,t+\frac14]\) and \(t_2\in[t+\frac34,t+1]\) such that
\begin{align}
\|u_t(t_i)\|^2\leq4\int_{t}^{t+1}\|u_t(s)\|^{2}ds\leq4[D(t)]^{\frac{4}{\rho+2}},\qquad i=1,2.\label{mean}
\end{align}

Next, we consider the memory term. We have the decomposition
\begin{align}\label{cond1}
-\left(\int_{0}^{t}g(t-s)A^{\frac12}u(s)ds,A^{\frac12}u(t)\right)
&=\frac12\int_{0}^{t}g(t-s)\left\|A^{\frac12}u(t)-A^{\frac12}u(s)\right\|^2ds\notag\\
&\quad-\frac12\int_{0}^{t}g(s)ds\left\|A^{\frac12}u(t)\right\|^2\notag\\
&\quad-\frac12\int_{0}^{t}g(t-s)\left\|A^{\frac12}u(s)\right\|^2ds.
\end{align}
For the last term on the right-hand side of \eqref{cond1}, by (AG), \eqref{ed}, and \((a+b)^2\leq2a^2+2b^2\), we obtain
\begin{align}\label{cond2}
\frac12\int_{0}^{t}g(t-s)\left\|A^{\frac12}u(s)\right\|^2ds
&\leq-\frac{1}{2k}\int_{0}^{t}g'(t-s)\left\|A^{\frac12}u(s)\right\|^2ds\notag\\
&\leq-\frac{1}{k}\int_{0}^{t}g'(t-s)\left\|A^{\frac12}u(t)-A^{\frac12}u(s)\right\|^2ds\notag\\
&\quad+\frac{1}{k}\int_{0}^{t}-g'(t-s)ds\left\|A^{\frac12}u(t)\right\|^2\notag\\
&\leq\frac2k(-E'(t))+\frac{g(0)}{k}\left\|A^{\frac12}u(t)\right\|^2.
\end{align}
Multiplying \eqref{abstract} by \(u(t)\) and integrating over \(V_0\times(t_1,t_2)\), we get
\begin{align*}
\int_{t_1}^{t_2}\left(u_{ss}(s),u(s)\right)ds+\int_{t_1}^{t_2}\left[\left\|A^{\frac12}u(s)\right\|^2-\left(\int_{0}^{s}g(s-\tau)A^{\frac12}u(\tau)d\tau,A^{\frac12}u(s)\right)\right]ds\\
+\int_{t_1}^{t_2}\left(\|u_s(s)\|^{\rho}u_s(s),u(s)\right)ds=0.
\end{align*}
Since \(\frac{d}{dt}\left(u_{t}(t),u(t)\right)=\left(u_{tt}(t),u(t)\right)+\|u_t(t)\|^2\), this yields
\begin{align}
\int_{t_1}^{t_2}\left[\|A^{\frac12}u(s)\|^2-\left(\int_{0}^{s}g(s-\tau)A^{\frac12}u(\tau)d\tau,A^{\frac12}u(s)\right)\right]ds\notag\\
=\int_{t_1}^{t_2}\|u_s(s)\|^2ds+\sum\limits_{i=1}^{2}J_i,\label{key}
\end{align}
where
\[
J_1=(u(t_1),u_t(t_1))-(u(t_2),u_t(t_2)),\qquad
J_2=-\int_{t_1}^{t_2}\left(\|u_s(s)\|^{\rho}u_s(s),u(s)\right)ds.
\]
Replacing the second term on the left-hand side of \eqref{key} with \eqref{cond1}, we obtain
\begin{align*}
&\int_{t_1}^{t_2}\left(1-\frac12\int_{0}^{s}g(\tau)d\tau\right)\|A^{\frac12}u(s)\|^2ds+\frac12\int_{t_1}^{t_2}\int_{0}^{s}g(s-\tau)\|A^{\frac12}u(s)-A^{\frac12}u(\tau)\|^2d\tau ds\\
&\quad-\frac12\int_{t_1}^{t_2}\int_{0}^{s}g(s-\tau)\|A^{\frac12}u(\tau)\|^2d\tau ds=\int_{t_1}^{t_2}\|u_s(s)\|^2ds+\sum\limits_{i=1}^{2}J_i.
\end{align*}
Combining this with \eqref{cond2}, we get
\begin{align*}
&\int_{t_1}^{t_2}\left(1-\frac12\int_{0}^{s}g(\tau)d\tau-\frac{g(0)}{k}\right)\|A^{\frac12}u(s)\|^2ds\\
&\quad+\frac12\int_{t_1}^{t_2}\int_{0}^{s}g(s-\tau)\|A^{\frac12}u(s)-A^{\frac12}u(\tau)\|^2d\tau ds\\
&\leq\int_{t_1}^{t_2}\|u_s(s)\|^2ds+\sum\limits_{i=1}^{2}J_i+\frac2k\int_{t_1}^{t_2}(-E'(s))ds.
\end{align*}
Since \(k\geq2g(0)\), we have
\[
1-\frac12\int_0^sg(\tau)d\tau-\frac{g(0)}{k}\geq\frac12\left(1-\int_0^sg(\tau)d\tau\right).
\]
Thus, using \(E'(t)\leq0\), \(t_1\in[t,t+\frac14]\), and \(t_2\in[t+\frac34,t+1]\), we obtain
\begin{align*}
\int_{t_1}^{t_2}E(s)ds
&\leq\frac32\int_{t_1}^{t_2}\|u_s(s)\|^2ds+\sum\limits_{i=1}^{2}J_i+\frac2k\int_{t}^{t+1}(-E'(s))ds\\
&\leq\frac32\int_{t_1}^{t_2}\|u_s(s)\|^2ds+\sum\limits_{i=1}^{2}J_i+\frac2k[D(t)]^2,
\end{align*}
that is,
\begin{equation}\label{mid}
\int_{t_1}^{t_2}E(s)ds\leq\frac32\int_{t_1}^{t_2}\|u_s(s)\|^2ds+\sum\limits_{i=1}^{2}J_i+\frac2k[D(t)]^2.
\end{equation}

Since \(t\leq t_1<t_2\leq t+1\), by \eqref{klll},
\begin{equation}\label{klkl}
\int_{t_1}^{t_2}\|u_s(s)\|^2ds\leq\int_{t}^{t+1}\|u_s(s)\|^2ds\leq[D(t)]^{\frac{4}{\rho+2}}.
\end{equation}
Using \eqref{embed} with \(s_1=1\) and \(s_2=0\), we have \(\|u(t_i)\|\leq C_1\|A^{\frac12}u(t_i)\|\) with \(C_1:=\lambda_1^{-1/2}\). Hence, by \eqref{mu_0}, \eqref{energy}, and \(E(t)\geq E(t_i)\) for \(t_i\geq t\),
\[
\|u(t_i)\|\leq C_1\left\|A^{\frac12}u(t_i)\right\|\leq C_1\sqrt{\frac{2}{\mu_0}E(t_i)}\leq C_1\sqrt{\frac{2}{\mu_0}E(t)}.
\]
Together with \eqref{mean}, this gives
\[
J_1^2\leq2\left(\|u(t_1)\|^2\|u_t(t_1)\|^2+\|u(t_2)\|^2\|u_t(t_2)\|^2\right)\leq32C_1^2\mu_0^{-1}E(t)[D(t)]^{\frac{4}{\rho+2}},
\]
and hence, by Young's inequality,
\[
J_1\leq\epsilon E(t)+\frac{8C_1^2}{\epsilon\mu_0}[D(t)]^{\frac{4}{\rho+2}}
\]
for any \(\epsilon>0\).

By \eqref{energy} and \(E'(t)\leq0\), we have \(\|u_t\|^2\leq2E(0)\). Let \(C_2:=(2E(0))^{\frac{\rho}{2}}\). By H\"older's inequality, \eqref{klkl}, and Young's inequality, we obtain
\begin{align*}
J_2
&\leq C_2\int_{t_1}^{t_2}\|u(s)\|\|u_s(s)\|ds
\leq C_1C_2\int_{t_1}^{t_2}\|A^{\frac12}u(s)\|\|u_s(s)\|ds\\
&\leq C_1C_2\sqrt{2}\mu_0^{-\frac12}[E(t)]^{\frac12}\int_{t_1}^{t_2}\|u_s(s)\|ds
\leq C_1C_2\sqrt{2}\mu_0^{-\frac12}[E(t)]^{\frac12}\left(\int_{t_1}^{t_2}\|u_s(s)\|^2ds\right)^{\frac12}\\
&\leq C_1C_2\sqrt{2}\mu_0^{-\frac12}[E(t)]^{\frac12}[D(t)]^{\frac{2}{\rho+2}}
\leq\epsilon E(t)+\frac{C_1^2C_2^2}{2\epsilon\mu_0}[D(t)]^{\frac{4}{\rho+2}}
\end{align*}
for any \(\epsilon>0\).

Combining the estimates for \(J_1\) and \(J_2\) with \eqref{klkl} and \eqref{mid}, we get
\[
\int_{t_1}^{t_2}E(s)ds\leq2\epsilon E(t)+\left(\frac{8C_1^2}{\epsilon\mu_0}+\frac{C_1^2C_2^2}{2\epsilon\mu_0}+\frac32\right)[D(t)]^{\frac{4}{\rho+2}}+\frac2k[D(t)]^2.
\]
Since \([D(t)]^2\leq E(t)\) and \(E'(t)\leq0\), we have
\begin{equation}\label{lko}
[D(t)]^2=[D(t)]^{\frac{4}{\rho+2}}[D(t)]^{\frac{2\rho}{\rho+2}}\leq E(0)^{\frac{\rho}{\rho+2}}[D(t)]^{\frac{4}{\rho+2}}.
\end{equation}
Thus
\begin{equation}\label{next}
\int_{t_1}^{t_2}E(s)ds\leq2\epsilon E(t)+\mu_1[D(t)]^{\frac{4}{\rho+2}}
\end{equation}
for any \(\epsilon>0\), where
\[
\mu_1:=\frac{8C_1^2}{\epsilon\mu_0}+\frac{C_1^2C_2^2}{2\epsilon\mu_0}+\frac32+\frac2kE(0)^{\frac{\rho}{\rho+2}}.
\]
Since \(E'(t)\leq0\), the mean value theorem gives some \(\beta\in[t_1,t_2]\subset[t,t+1]\) such that
\begin{align}
\int_{t_1}^{t_2}E(s)ds=E(\beta)(t_2-t_1)\geq\frac12E(t+1).\label{most}
\end{align}
Combining \eqref{nba}, \eqref{next}, \eqref{most}, and \eqref{lko}, we obtain
\begin{align*}
E(t)
&=[D(t)]^2+E(t+1)\leq[D(t)]^2+2\int_{t_1}^{t_2}E(s)ds\\
&\leq[D(t)]^2+2\mu_1[D(t)]^{\frac{4}{\rho+2}}+4\epsilon E(t)\\
&\leq\left[E(0)^{\frac{\rho}{\rho+2}}+2\mu_1\right][D(t)]^{\frac{4}{\rho+2}}+4\epsilon E(t).
\end{align*}
Taking \(\epsilon=\frac18\), we get
\[
E(t)\leq2\left[E(0)^{\frac{\rho}{\rho+2}}+2\mu_1\right][D(t)]^{\frac{4}{\rho+2}},
\]
and hence
\[
[E(t)]^{1+\frac{\rho}{2}}\leq\mu[D(t)]^2=\mu[E(t)-E(t+1)],
\]
where
\[
\mu:=\left[2E(0)^{\frac{\rho}{\rho+2}}+4\mu_1\right]^{1+\frac{\rho}{2}}.
\]
Since \(E'(t)\leq0\),
\[
\sup_{t\leq s\leq t+1}[E(t)]^{1+\frac{\rho}{2}}\leq[E(t)]^{1+\frac{\rho}{2}}\leq\mu[E(t)-E(t+1)].
\]
Applying Lemma \ref{lemmp}, we obtain the desired decay estimates. This completes the proof for \(\rho\geq0\).
\end{proof}

\section{Discussion and comparison with the literature}\label{sec5}

We conclude by placing our results in the context of the existing literature.

For \(\rho=0\), Problem \eqref{abstract} reduces to a viscoelastic wave equation with frictional damping. In this case, the exponential decay obtained in Theorem \ref{main} is consistent with the classical results of Cavalcanti and Oquendo \cite{MR2044797} and with subsequent refinements in \cite{MR2542886,MR3702699,MR4232931}. The novelty here is not the qualitative rate itself, but the fact that it is proved in a general abstract Hilbert-space setting, with explicit constants and without additional geometric conditions on the underlying domain. Related abstract and plate models with memory and frictional dissipation can be found in \cite{MR2386941,MR2793256,MR3401605,MR3449383,MR4145756,MR4592221}.

For \(\rho>0\), the term \(\|u_t\|^\rho u_t\) is a nonlocal damping. Equations of this type, without the memory term, were studied by Haraux and coauthors \cite{MR3095203,MR3918410} and by Lour\^edo et al. \cite{MR3214190}; see also \cite{MR4098779,MR4369100} for nonlocal damping with anti-damping, and \cite{MR4341621,MR4342158,MR4064014} for related attractor results. In those works, the typical decay is polynomial for \(\rho>0\) and exponential for \(\rho=0\). Theorem \ref{main} shows that the same qualitative decay rates persist when the viscoelastic memory term is added, at least under the exponential decay assumption (AG) on \(g\). This is not a direct consequence of the memoryless results, because the memory term changes both the energy functional and the multiplier identities.

The case \(-1<\rho<0\) is the main novelty of the paper. Here the damping operator \(v\mapsto \|v\|^\rho v\) is not Lipschitz at the origin, and the classical uniqueness arguments based on Lipschitz estimates do not apply. We overcome this difficulty by using the monotonicity of this operator, which yields uniqueness for the whole range \(\rho>-1\). The decay estimate
\[
E(t)\le C(\rho,\mu_0,\lambda_1,g(0),G_0,k)
\left(1+E(0)^{-1}\right)^{\frac{2\rho+2}{|\rho|}}
t^{\frac{2\rho+2}{\rho}},\qquad t\ge t_0,
\]
is polynomial, since \(2\rho+2>0\) and \(\rho<0\), so the exponent \((2\rho+2)/\rho\) is negative. Here \(t_0>0\) is defined by \(\int_0^{t_0}g(s)ds=\frac12 G_0\), as in \eqref{existencet0}. As \(\rho\to -1^+\), this exponent tends to \(0^-\), and the decay becomes arbitrarily slow; as \(\rho\to 0^-\), the exponent tends to \(-\infty\), so the decay becomes arbitrarily fast. This reflects the singular nature of the nonlocal damping in the negative-exponent regime. To our knowledge, no comparable abstract result is available; the closest works are \cite{MR4611266,MR3986205,MR4577006} for nonlocal damping in beam models, but they do not cover the negative-exponent regime \(-1<\rho<0\) in the abstract setting considered here.

Several limitations and open problems are worth mentioning. The case \(\rho\le -1\) is left open, since the damping operator is no longer continuous and monotone at the origin, and the dissipation rate \(\|u_t\|^{\rho+2}\) may become singular. The assumption (AG) requires exponential decay of the memory kernel; it would be interesting to extend the analysis to more general kernels satisfying \(g'(t)\le -H(g(t))\) or \(g'(t)\le -\xi(t)g(t)\). Finally, the operator \(A\) is assumed to be positive, self-adjoint and with compact resolvent; removing the compactness assumption is another natural direction. These questions will be addressed in future work.

\section*{Declarations}
\noindent\textbf{Conflict of interest.} The authors declare that they have no conflict of interest.

\end{document}